\newcommand{\R}{\mathbb{R}}
\newcommand{\1}{\mathbf{1}}
\DeclareMathOperator{\rank}{rank}
\newcommand{\btoi}[1]{\ensuremath{\lfloor #1\rfloor}}
\newcommand{\itob}[1]{\ensuremath{\lceil #1\rceil}}
\DeclareMathOperator*{\argmin}{argmin}
\newcommand{\vb}{\mathbf{v}}
\newcommand{\xb}{\mathbf{x}}
\newcommand{\yb}{\mathbf{y}}
\newcommand{\zb}{\mathbf{z}}
\newcommand{\Ib}{\mathbf{I}}
\newcommand{\omegab}{\bm{\omega}}
\DeclareFontFamily{OT1}{pzc}{}
\DeclareFontShape{OT1}{pzc}{m}{it}{<-> s * [1.200] pzcmi7t}{}
\DeclareMathAlphabet{\mathpzc}{OT1}{pzc}{m}{it}
\newcommand*\mcap{\mathbin{\mathpalette\mcapinn\relax}}
\newcommand*\mcapinn[2]{\vcenter{\hbox{$\mathsurround=0pt
  \ifx\displaystyle#1\textstyle\else#1\fi\bigcap$}}}
\newcommand*\mcupinn[2]{\vcenter{\hbox{$\mathsurround=0pt
  \ifx\displaystyle#1\textstyle\else#1\fi\bigcup$}}}
\newtheorem{theorem}{Theorem}
\newtheorem{definition}{Definition}
\newtheorem{lemma}{Lemma}
\newtheorem{remark}{Remark}
\title{\bf Distributed Algorithms that Solve Boolean Equations\\ with Local and Differential Privacies\thanks{A preliminary version of the work was reported at the IEEE Conference on Decision and Control in 2020 \cite{cdc}. }}
\date{}
 	\author{Hongsheng Qi\thanks{Key Laboratory of Systems and Control, Institute of Systems Science, Academy of Mathematics and Systems Science, Chinese Academy of Sciences, Beijing 100190, China; School of Mathematical Sciences, University of Chinese Academy of Sciences, Beijing 100049, China. (qihongsh@amss.ac.cn)}, Bo Li\thanks{Key Laboratory of Mathematics Mechanization, Academy of Mathematics and Systems Science, Chinese Academy of Sciences, Beijing 100190, China. (libo@amss.ac.cn)}, Rui-juan Jing\thanks{Faculty of Science, Jiangsu University, Zhenjiang 212013,  China. (rjing@ujs.edu.cn)}, Lei Wang\thanks{Australian Center for Field Robotics, School of Aerospace, Mechanical and Mechatronic Engineering, The University of Sydney, NSW 2006, Australia. (lei.wang2@sydney.edu.au)}, Alexandre Proutiere\thanks{Department of Automatic Control, KTH Royal Institute of Technology, Stockholm 100-44, Sweden. (alepro@kth.se)}, Guodong Shi\thanks{Australian Center for Field Robotics, School of Aerospace, Mechanical and Mechatronic Engineering, The University of Sydney, NSW 2006, Australia. (guodong.shi@sydney.edu.au)}}
\begin{document}
\maketitle

\begin{abstract}
 In this paper, we propose  distributed algorithms that  solve a system of Boolean equations over a network, where each node in the network possesses only  one Boolean equation from the system. The Boolean equation assigned at any particular  node is a {\em private} equation known to this node only, and the nodes aim to compute the exact set of solutions to the system without exchanging their local equations. We show that each private Boolean equation can be locally lifted to a linear  algebraic equation under a basis of Boolean vectors, leading to a network linear equation that is distributedly solvable using existing distributed linear equation algorithms as a subroutine. A number of exact or approximate solutions to the induced linear equation are then computed at each node from different initial values. The solutions to the original  Boolean equations are eventually computed locally via a Boolean vector search algorithm. We prove that given solvable Boolean equations,  when the initial values of the nodes for the distributed linear equation solving step are i.i.d selected according to a uniform distribution in a high-dimensional cube,   our   algorithms return the exact solution set of the Boolean equations at each node with high probability. Furthermore,  we present  an algorithm for distributed verification of the satisfiability of Boolean equations, and prove its correctness. Finally, we show that by utilizing   linear equation solvers with differential privacy  to replace the in-network computing routines, the overall distributed Boolean equation algorithms can be made differentially private. Under the standard Laplace mechanism, we prove an explicit level of noises that can be injected in the linear equation steps for ensuring  a prescribed level of differential privacy.

 \end{abstract}

 \section{Introduction}

 Computing the solutions to a system of Boolean equations is a fundamental computation problem. The Boolean satisfiability ({\sf SAT}) problem for determining whether a Boolean formula is satisfiable or not, was the first computation problem proven to be NP-complete \cite{complexitybook}. The solvability of (static) Boolean equations is directly related to problems for gene regulations in biology, and   for security of the keystream
generation in cryptography \cite{corblin2007,bardet2013}. In the meantime,  Boolean dynamical systems have found broad  applications in  modeling epidemic processes of computer networks \cite{virus},    social opinion dynamics \cite{social},   and decision making in economics \cite{eco} since logical states are ubiquitous in our world. For such systems, finding their steady states, if any, again falls to a problem of solving a system of Boolean equations \cite{ara1,ara2}. As a result, static or dynamic Boolean equations attracted much research interest in computing theory and engineering  in terms of complexity analysis,  efficient algorithm design, and signal processing \cite{bardet2013,Chaves2013,dimitri2019}.

We are interested in systems of Boolean equations that are defined over a network, and aim to develop distributed algorithms that can solve such equations, and verify their satisfiability in a distributed manner over the network.

\subsection{System of Boolean Equations over a Network}
Consider the following system of Boolean equations with respect to decision variable $x=(x_1,\dots,x_m)$:
\begin{equation}
\begin{aligned} \label{bes}
&f_1(x_1,\dots,x_m)=\sigma_1 \\
&\ \ \ \vdots \\
&f_n(x_1,\dots,x_m)=\sigma_n
\end{aligned}
\end{equation}
where $x_i\in\{0,1\}$ for $i=1,\dots,m$ and $\sigma_j\in\{0,1\}$, each $f_j$ maps from the $m$-dimensional binary space $\{0,1\}^m$ to the binary space $\{0,1\}$ for $j=1,\dots,n$. The solution set to the $i$th equation in the system
$
f_i(x)=\sigma_i
$,
is denoted as $\mathcal{S}_i\in \{0,1\}^m$.

Let there be a network with $n$ nodes indexed in the set $\mathrm{V}=\{1,\dots,n\}$. The  communication structure of the network is described by a simple, undirected, and connected graph $\mathrm{G}=(\mathrm{V},\mathrm{E})$, where each edge  $\{i,j\}\in\mathrm{E}$ is an un-ordered pair of two distinct nodes in the set $\mathrm{V}$. The neighbor set of node $i\in\mathrm{V}$ over the network is specified as $\mathrm{N}_i=\{j:\{i,j\}\in\mathrm{E}\}$.   The $i$th equation in the Boolean equation system (\ref{bes}), along with its solution set $\mathcal{S}_i$, is assumed to be a local and private equation assigned to node $i$ only. We are interested in the following distributed computation problem, where the nodes aim to compute the solutions to the Boolean equation system (\ref{bes}) over the network $\mathrm{G}$ distributedly.

\begin{definition}\label{def:algo.dist}(Locally Private Distributed Algorithm)
A distributed algorithm that  solves  the Boolean equation system (\ref{bes})  satisfies
\begin{itemize}
\item[(i)] (Local privacy) Nodes do not  directly reveal their private Boolean equations or their solution sets to other nodes.
\item[(ii)] (Decentralized computation) Each node $i\in\mathrm{V}$ holds   a dynamical state $\mathbf{x}_i(t)$, shares the state  with her neighbors, and carries out local computations for the update of  $\mathbf{x}_i(t)$ based on  her private Boolean equation and received neighbor states.
\item[(iii)] (Output consistency) The solution set of the Boolean equation system (\ref{bes}) is   obtained at all nodes as the algorithm output.
\end{itemize}
\end{definition}

This notion of locally private distributed algorithm is consistent with the line of study  on distributed convex optimization  \cite{nedic2010constrainedTAC}, where nodes in a network hold local cost functions and the goal of the network is to minimize the summation of all cost functions with a common decision variable among the nodes. Note that, since the representation of the Boolean formula $f_i$ is not unique,   it is reasonable to define the local privacy on the solution set $\mathcal{S}_i$.  In practice, the local privacy is motived from the fact that nodes do not necessarily trust their neighbors \cite{tomlin20}. We note that the shared dynamical states $\mathbf{x}_i(t)$ will certainly have to  contain information about the Boolean equation $f_i(x)=\sigma_i$ and its solution set $\mathcal{S}_i$. Neighbors of node $i$, or eavesdroppers having access to  $\mathbf{x}_i(t)$ might infer the  equation  $f_i(x)=\sigma_i$ or the solution set $\mathcal{S}_i$. Therefore, Definition \ref{def:algo.dist}  does not capture this  {\it indirect} privacy risk.

{In practice, it is also of interest to have a quantitative measure for distributed algorithms regarding the privacy risks of the private datasets when the network may face eavesdroppers having access to  all node-to-node communications, e.g., \cite{d3}.  One popular privacy metric has been differential privacy, where in a probabilistic setting,  privacy of the datasets is measured by the regularity of the likelihood function of the node-to-node communications with respect to the private data  \cite{dwork2014algorithmic}. Thus in differentially private algorithms, there must be certain randomization mechanisms.   } Let $f(x)=\sigma$ and $f'(x)=\sigma$ be two Boolean equation systems in the form of (\ref{bes}). Denote  $\mathcal{S}=\mathcal{S}_1\times \dots \times \mathcal{S}_n,\mathcal{S}'=\mathcal{S}_1'\times \dots \times \mathcal{S}_n'$ as their (local) solution sets, respectively. Then we term the two Boolean equations   to be adjacent if there is a unique $k\in\{1,\dots,n\}$ such that $\mathcal{S}_k$ and $\mathcal{S}_k'$  with $|\mathcal{S}_k|=|\mathcal{S}_k'|$ differ by only one entry, and the rest $\mathcal{S}_j$ and $\mathcal{S}_j'$
are identical.  Let $\mathrm{M}$ be the mapping from  $\mathcal{S}$ to    the node states $x_i(t)$ for all $i$ and for all $t$.  We further introduce the following definition on differential privacy of a distributed algorithm for the Boolean equations.

\begin{definition}\label{dp} (Differentially Private Distributed Algorithm)
Consider a (randomized) distributed algorithm  for the Boolean equation system (\ref{bes}). Let $\mathbb{P}$ denote the probability measure that captures all randomness in the algorithm.   Then the algorithm  is called  $\epsilon$-differentially private  if there holds
\begin{align}
\mathbb{P}(\mathrm{M}(\mathcal{S})\in \mathcal{E})\leq e^\epsilon  \mathbb{P}(\mathrm{M}(\mathcal{S}')\in \mathcal{E})
\end{align}
for any pair of two adjacent   $\mathcal{S}$ and $\mathcal{S}'$ and for any event $\mathcal{E}$ of the network node states.
\end{definition}

\subsection{Contributions}
First of all,  a distributed algorithm in the sense of Definition \ref{def:algo.dist} is  developed  that  solves  a system of Boolean equations that is satisfiable over a network.
This algorithm consists of three ingredients:
\begin{itemize}
\item[(i)] Each private Boolean equation is lifted by local computation at each node to a linear  algebraic equation under a basis of Boolean vectors;
\item[(ii)] The network nodes distributedly  compute a number of exact solutions to the induced system of linear equations via an exact projection consensus algorithm \cite{nedic2010constrainedTAC}  from randomly selected  initial values;
\item[(iii)] Each node locally computes the solution set of original system of Boolean equations from the linear equation solutions by a Boolean vector search algorithm.
\end{itemize}
We prove that if the initial values of the nodes for the distributed linear equation solving step are i.i.d generated according to a uniform distribution in a high-dimensional cube, with probability one the proposed algorithm returns the exact solution set of the Boolean equations at each node throughout the network. The complexity of the local Boolean vector search algorithm is also established. Next, we generalize this algorithm based on exact projection consensus, which in theory would require an infinite number of iterations,  to an algorithm that relies only on approximate projections with a finite number of iterations. We also prove this extended algorithm can with high probability guarantee  the exact solution set of the Boolean equations at each node to be computed at each node. Finally, we propose an algorithm for distributed verification of the satisfiability of the system of Boolean equations, and prove its correctness. In all these algorithms, if  the subroutines for solving the induced   linear equations can be made differentially private and computationally accurate (see e.g. \cite{PPSC}), the distributed Boolean equation algorithms become differentially private.  Under standard Laplace mechanism and the projection consensus algorithm, we prove an explicit level of noises to be injected in the linear equation steps for ensuring a prescribed level of differential privacy. Some preliminary results on the algorithms for solving the Boolean equation system with known solvability  were reported at the IEEE Conference on Decision and Control in 2020 \cite{cdc}.

\subsection{Related Work}
Our  definition of distributed algorithms for  Boolean equation systems is along the same line of research on  distributed convex optimization  \cite{nedic2010constrainedTAC}, network linear  equation solving \cite{mou2015,shi2017networkTAC}, distributed signal processing \cite{2010}, distributed sensor estimation \cite{kar2012tit}, and distributed stochastic approximation over networks \cite{bianchi2013tit}. This line of studies can be traced back to the seminal work of Tsitsiklis, Bertsekas, and   Athans in 1986 \cite{tsi1986}. The central idea is, for a global computation task decomposed over a number of nodes in a network,  the task can be achieved at each node by the nodes  running local computations according to their local tasks which are interconnected by an average consensus process.  Since average consensus processes are inherently distributed relying only on local and anonymous communications, and rather robust against  factors such as disturbances, communication failures, network structural changes, etc., \cite{kar2010,tit2010,shi2015tit}, such  distributed computations have the advantage of being resilient and scalable. In fact, the construction of our algorithms for solving the Boolean equations distributedly  utilizes the framework on distributed linear equation solving  \cite{mou2015,shi2017networkTAC} as a subroutine, and the node communications in our algorithms do not rely on node identities which enables anonymous computing  \cite{julien}.

An intuitive distributed algorithm that solves the considered Boolean equation systems may be established via  recursive set operations. In particular, one can set $S_i(0)$ as the solution set $\mathcal{S}_i$, and let $S_i(t)$ be a sequence of subset of $\mathcal{S}_i $ from the recursion 
\begin{align}\label{set}
S_i(t+1)=\bigcap_{j\in \mathrm{N}_i} S_j(t), \ i=1,\dots,n.
\end{align}
In this case, each $\mathcal{S}_i$ can be encoded into a vector with $2^m$ dimensions, and the recursion indeed computes the solution set to the Boolean equation (\ref{bes}) in $D_{\mathrm{G}}$ steps, where $D_{\mathrm{G}}$ is the diameter of the graph $\mathrm{G}$. On the algorithm (\ref{set}), there have been established results  on tight lower and upper bounds for both   communication complexity and   steps needed for convergence   \cite{siam1992,focs,peleg}.  We note that in the recursion (\ref{set}), each node needs to send its solution set $\mathcal{S}_i $ to its neighbors, leading to a violation of  the local privacy condition in Definition 1. Therefore, our approach taking the in-network computations in a $2^m$-dimensional Euclidean space avoids this direct local privacy risk, and incorperating differentially private  linear algebraic equation solvers can even make the overall algorithms  differentially private in a global sense. In the meantime, how to modify (\ref{set}) so that it becomes locally and differentially private seems not to be a straightforward  problem.

We also note that our work is related, but quite different from the work on parallel algorithms for SAT problems \cite{psat1,psat2,psat3}. The focus on parallel SAT solvers is about using a multicore architecture from a set of processing units  with  a shared memory to solve SAT problems with specific structures \cite{psat1,psat2,psat3}. In comparison with the distributed computation setting, the parallel decomposition of the problem is part of the design of the algorithm, and the shard memory implies an all-to-all communication structure among the processing units. Therefore, the primary motivation of the proposed  distributed algorithms for Boolean equations is not acceleration of  centralized algorithms, but rather exploring the possibilities of generalizing the distributed computing framework for continuous and convex problems to  Boolean equations with a discrete and combinatorial nature.

\subsection{Paper Organization}
In Section \ref{secpre}, we introduce some preliminary algorithms and results for Boolean equations, average consensus algorithms, and projection consensus algorithms. Section \ref{secsolver} presents the algorithms that are used for distributedly solving a system of Boolean equations, and prove their correctness provided that the equations are satisfiable. Then Section \ref{secsat} presents a distributed  algorithm  that can verify satisfiability definitively  for a system of Boolean equations over a network. Finally in Section \ref{seccon} some concluding remarks are given.

\section{Preliminaries}\label{secpre}
We first present some preliminaries on the concepts and tools that are incremental for the construction of our algorithm.  First of all, we  review methods that can generate  a matrix representation for a Boolean mapping when logical variables are represented by certain Boolean vectors (${\sf BooleanMatricization}$), and algorithms that can solve a system of linear algebraic equations distributedly over a network ({\sf DistributedLAE}). Although such algorithms are well established in their respective literature, we present some basic form and analysis of the algorithms in order to facilitate  a self-contained presentation. Then, we  present a novel search algorithm ({\sf BooleanVectorSearch}) for localizing  all Boolean vectors in a given affine subspace, and prove its correctness and computational efficiency.

\subsection{Matrix Representation of Boolean Mappings}

For any integer  $m\geq 2$,  we introduce two mappings:
\begin{itemize}
\item[(i)] $\btoi{\cdot}:\{0,1\}^m\rightarrow\{1,\dots, 2^m\}$, where $\btoi{i_1\cdots i_m}=\sum_{k=1}^m i_k2^{m-k}+1$;

\item[(ii)]
$\itob{\cdot}:\{1,\dots, 2^m\}\rightarrow\{0,1\}^m$ with $\itob{i}=[i_1\dots i_m]$ satisfying $i=\sum_{k=1}^m i_k2^{m-k}+1$.
\end{itemize}

For any integer $c$, we let $\delta_c^i$ be the $i$-th column of the $c\times c$ identify matrix $\mathbf{I}_c$, and define  $$
\Delta_c=\{\delta_c^i:\ i=1,\dots,c\}.
$$
We can then establish  a one-to-one correspondence between the elements in $\{0,1\}^m$ and the elements in $\Delta_{2^m}$. To this end, we define
\begin{itemize}
\item  The mapping $\Theta_m(\cdot)$ from  $\{0,1\}^m$ to $\Delta_{2^m}$:
\begin{align}\label{eq:composite.boolean.map}
\Theta_m(x):=\delta_{2}^{x_1+1}\otimes\cdots\otimes\delta_{2}^{x_m+1}=\delta_{2^m}^{\sum_{i=1}^m x_i2^{m-i}+1}=\delta_{2^m}^{\lfloor  x \rfloor},
\end{align}
for all    ${x}=[x_1,\dots,x_m]\in \{0,1\}^m$, where $\otimes$ represents the Kronecker product.
\item The mapping  $\Upsilon_m(\cdot)$ from  $\Delta_{2^m}$ to $\{0,1\}^m$:
\begin{align}
	\Upsilon_m(\delta_{2^m}^i):= \itob{i}
\end{align}
for all $i=1,\dots,2^m$.
\end{itemize}
We can easily verify
$
\Upsilon_m\big(\Theta_m(x)\big)=x
$ holds for all $x\in \{0,1\}^m$, and therefore, we have obtained a desired bijective mapping   between  $\{0,1\}^m$ and $\Delta_{2^m}$. It turns out, with the help of the vectors in $\Delta_{2^m}$, one can represent any Boolean mapping  $g(x_1,\dots,x_m)$ from $\{0,1\}^m$ to $\{0,1\}$ by a matrix $\mathbf{M}_g\in \mathbb{R}^{2\times 2^m}$.

\begin{lemma}\label{prop1}
Let $g(x_1,\dots,x_m)$ be a Boolean mapping from $\{0,1\}^m$ to $\{0,1\}$. Then there exists $\mathbf{M}_g\in \mathbb{R}^{2\times 2^m}$ as a representation of $g(\cdot)$ in the sense that
\begin{align}
\mathbf{M}_g \big(\Theta_m(x)\big) = \Theta_1 \big(g(x)\big)
\end{align}
for all  $x=(x_1,\dots,x_m)\in\{0,1\}^m$.
\end{lemma}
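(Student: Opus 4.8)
The plan is to construct $\mathbf{M}_g$ explicitly, column by column, exploiting the fact that $\Theta_m$ sends each Boolean vector to a single standard basis vector. The starting point is the identity $\Theta_m(x)=\delta_{2^m}^{\btoi{x}}$ recorded in \eqref{eq:composite.boolean.map}: for every $x\in\{0,1\}^m$, the lifted vector $\Theta_m(x)$ is exactly the $\btoi{x}$-th column of the identity matrix $\mathbf{I}_{2^m}$. Consequently, for any candidate matrix $\mathbf{M}_g\in\mathbb{R}^{2\times 2^m}$, the product $\mathbf{M}_g\Theta_m(x)=\mathbf{M}_g\delta_{2^m}^{\btoi{x}}$ simply selects the $\btoi{x}$-th column of $\mathbf{M}_g$. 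This observation reduces the existence claim to prescribing the columns of $\mathbf{M}_g$ appropriately.

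First I would define $\mathbf{M}_g$ by specifying its $i$-th column, for each $i\in\{1,\dots,2^m\}$, to be $\Theta_1\big(g(\itob{i})\big)=\delta_2^{\,g(\itob{i})+1}\in\Delta_2$. Equivalently, $\mathbf{M}_g$ is the $2\times 2^m$ matrix whose columns are $\Theta_1(g(\itob{1})),\dots,\Theta_1(g(\itob{2^m}))$, each of which is one of the two vectors $\delta_2^1,\delta_2^2$. This is a perfectly legitimate real matrix, so all that remains is to verify the representation identity.

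Next I would check the identity directly. Fix $x\in\{0,1\}^m$. By the column-selection observation, $\mathbf{M}_g\Theta_m(x)$ equals the $\btoi{x}$-th column of $\mathbf{M}_g$, which by construction is $\Theta_1\big(g(\itob{\btoi{x}})\big)$. It then suffices to invoke the relation $\itob{\btoi{x}}=x$, i.e. that $\btoi{\cdot}$ and $\itob{\cdot}$ are mutually inverse bijections between $\{0,1\}^m$ and $\{1,\dots,2^m\}$ (equivalently, the already-noted fact $\Upsilon_m(\Theta_m(x))=x$). With this substitution, $\mathbf{M}_g\Theta_m(x)=\Theta_1(g(x))$, as required, and since $x$ was arbitrary the lemma follows.

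There is no serious obstacle here; the entire content is the bookkeeping that links the binary-to-integer encoding $\btoi{\cdot}$ with the column index of the identity matrix. The only point demanding a little care is confirming that $\Theta_m(x)$ really is a single standard basis vector rather than a generic Kronecker product, which is precisely the chain of equalities in \eqref{eq:composite.boolean.map} where the tensor product of the factors $\delta_2^{\,x_i+1}$ collapses to $\delta_{2^m}^{\btoi{x}}$, together with the inversion $\itob{\btoi{x}}=x$. Once these two facts are in hand, both the construction and its verification are immediate.
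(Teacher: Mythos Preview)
Your proposal is correct and matches the paper's own argument: the paper likewise constructs $\mathbf{M}_g$ explicitly by taking its $i$-th column to be $\Theta_1\big(g(\itob{i})\big)$ (its equation~\eqref{eq1}), justified via the same column-selection property of $\Theta_m(x)=\delta_{2^m}^{\btoi{x}}$ and the bijection $\itob{\btoi{x}}=x$. The only cosmetic difference is that the paper frames this through a commutative diagram before writing down the matrix, whereas you go straight to the construction and verification.
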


Lemma \ref{prop1} can be viewed as a special case of representing Boolean mappings by matrices in \cite{Cheng2009} for the analysis of  Boolean dynamical  networks. However, the existence of the matrix $\mathbf{M}_g$ can in fact be directly established from the following commutative  diagram  in view of the identity  $
\Upsilon_m\big(\Theta_m(x)\big)=x
$:
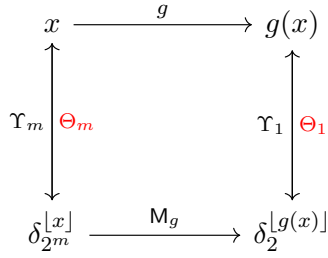
\begin{figure}[H]
\centering
\tikzcdset{row sep/normal=2cm,column sep/normal=2cm}
\begin{tikzcd}
x \arrow[r, "g"] \arrow[d, "\Theta_m" red]
& g(x) \arrow[d, "\Theta_1" red] \\
\delta_{2^m}^{\lfloor  x \rfloor} \arrow[r, "\mathsf{M}_g" black] \arrow[u, "\Upsilon_m"]
& \delta_{2}^{\lfloor  g(x) \rfloor} \arrow[u, "\Upsilon_1"]
\end{tikzcd}
\caption{The commutative digram for mappings over the logical spaces and the vector spaces. }
\end{figure}
Here in the diagram  $\mathsf{M}_g$ is a mapping  from $\Delta_{2^m}$ to $\Delta_{2}$.  Note that the elements in $\Delta_{2^m}$ (resp. $\Delta_{2}$) form a basis of $\mathbb{R}^{2^m}$ (resp. $\mathbb{R}^2$),  $\mathsf{M}_g$ can be naturally understood as a linear operator from  $\mathbb{R}^{2^m}$ to  $\mathbb{R}^2$. As a result, the matrix $\mathbf{M}_g$ can be obtained directly as a matrix representation of $\mathsf{M}_g$ by
\begin{align}\label{eq1}
\mathbf{M}_g=\Big[\delta_{2}^{g(\itob{1})}\ \delta_{2}^{g(\itob{2})}\ \dots, \ \delta_{2}^{g(\itob{2^m})}\Big]
\end{align}
where the $i$th column of $\mathbf{M}_g$ is the $\Theta_1\big(g(\itob{i})\big)$, i.e., the coordinate of $\mathsf{M}_g (\delta_{2^m}^{i})$ under the basis $\Delta_1$. From this perspective we also see that the matrix $\mathbf{M}_g$ stated in Lemma \ref{prop1} is in fact unique.

The equation (\ref{eq1}) serves also as a brutal-force way of computing the matrix $\mathbf{M}_g$ for any  $g(x_1,\dots,x_m)$ from $\{0,1\}^m$ to $\{0,1\}$, where one needs to examine the value of $g$ for all $x\in\{0,1\}^m$. A  systematic  way of computing the matrix $\mathbf{M}_g$ from the original Boolean formula  of $g$ is the {\em semi-tensor product approach} developed by Cheng and his colleagues \cite{Cheng2009,Cheng-Qi2010}, which puts the study of Boolean dynamical networks into a fully algebraic framework. We refer to \cite{book2011} for a detailed  introduction to semi-tensor product approach and its applications to Boolean networks. 
\begin{remark}
Lemma \ref{prop1} can be easily  generalized to Boolean mappings $g(x_1,\dots,x_m)$   from $\{0,1\}^m$ to $\{0,1\}^l$, where the resulting $\mathbf{M}_g$ becomes a $2^{l}\times 2^n$ matrix. The existence of such a matrix follows from the same argument, which implies
$$
\mathbf{M}_g=\Big[\delta_{2^l}^{\btoi{g(\itob{1})}}\ \delta_{2^l}^{\btoi{g(\itob{2})}}\ \dots, \ \delta_{2^l}^{\btoi{g(\itob{2^m})}}\Big].
$$
Again, by the semi-tensor product approach, one can alternatively compute this matrix representation for any given Boolean formula   \cite{book2011}.
\end{remark}
We denote by ${\sf BooleanMatricization}$ an algorithm that produces a matrix representation as output from a Boolean mapping  as  input, e.g., $
\mathbf{M}_g={\sf BooleanMatricization}(g)
$ for any  Boolean mapping $g$ from $\{0,1\}^m$ to $\{0,1\}^l$.

\subsection{Projection Consensus for Linear Algebraic Equations}

Consider the network $\mathrm{G}=(\mathrm{V},\mathrm{E})$. Suppose each node $i\in\mathrm{V}$ holds $(\mathbf{H}_i,\mathbf{z}_i)$ with $\mathbf{H}_i \in\mathbb{R}^{k\times d}$ and $\mathbf{z}_i\in \mathbb{R}^k$ defining a linear equation
\begin{align}\label{eqlinear}
  \quad \mathbf{H}_i \mathbf{y}= \mathbf{z}_i
\end{align}with respect to an unknown $\mathbf{y}\in\R^d$. Then over the network  the following  system of linear algebraic equations is defined
\begin{equation}\label{LAE}
\mathbf{H} \mathbf{y}=\mathbf{z},
\end{equation}
where
\[
\mathbf{H}=\begin{bmatrix}
\mathbf{H}_1 \\
\vdots\\
\mathbf{H}_n
\end{bmatrix} \in\mathbb{R}^{(nk)\times d},\ \ \mathbf{z}=\begin{bmatrix}
\mathbf{z}_1 \\
\vdots\\
\mathbf{z}_n
\end{bmatrix}\in \mathbb{R}^{nk}.
\]
Define the following affine subspace in  $\mathbb{R}^d$ specified by the linear equation (\ref{eqlinear}):
$$
\mathcal{E}_i:=\big\{\mathbf{y}\in \mathbb{R}^d:\  \mathbf{H}_i \mathbf{y}= \mathbf{z}_i \big\}
$$
and then let $\mathcal{P}_i(\cdot)$ be the projector onto the affine space $\mathcal{E}_i$.  Let $\mathcal{P}_\ast(\cdot)$ be the projector onto the affine subspace $$
\mathcal{E}:=\big\{\mathbf{y}\in \mathbb{R}^d:\  \mathbf{H} \mathbf{y}= \mathbf{z} \big\}
$$
whenever $\mathcal{E}=\bigcap_{i=1}^n \mathcal{E}_i$ is nonempty.  We can utilize these projectors to obtain a distributed projection consensus solver to the system of linear equations (\ref{LAE}).
\begin{lemma}\label{prop2}
Suppose the system of linear equations (\ref{LAE}) admits at least one exact solutions. Then along the recursion
\begin{align}\label{eq2}
\mathbf{x}_i(t+1)=\mathcal{P}_i\Big(\mathbf{x}_i(t)+\epsilon \sum_{j\in\mathrm{N}_i}\big(\mathbf{x}_j(t)- \mathbf{x}_i(t)\big)\Big), \ \ i=1,\dots,n
\end{align}
there holds for   $0<\epsilon<1/n$ that
$\lim_{t\to \infty}\mathbf{x}_i(t)=\sum_{k=1}^n\mathcal{P}_\ast (\mathbf{x}_k(0))/n \in\mathcal{E}
$ for all $i\in\mathrm{V}$.
\end{lemma}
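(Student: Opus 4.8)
The plan is to exploit the fact that each local projector $\mathcal{P}_i$ is an \emph{affine} map (projection onto an affine subspace) and that every point of the nonempty intersection $\mathcal{E}=\bigcap_i\mathcal{E}_i$ is a common fixed point of all the $\mathcal{P}_i$. Fix any $\mathbf{y}_\ast\in\mathcal{E}$; then $\mathcal{P}_i(\mathbf{y}_\ast)=\mathbf{y}_\ast$ for every $i$, and writing $V_i=\{\mathbf{y}:\mathbf{H}_i\mathbf{y}=\mathbf{0}\}$ for the direction space of $\mathcal{E}_i$ and $\Pi_i$ for the orthogonal projection onto $V_i$, one has $\mathcal{P}_i(\mathbf{y})-\mathbf{y}_\ast=\Pi_i(\mathbf{y}-\mathbf{y}_\ast)$. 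Introducing the error variables $\tilde{\mathbf{x}}_i(t)=\mathbf{x}_i(t)-\mathbf{y}_\ast$ and stacking them into $\tilde{\mathbf{X}}(t)\in\R^{nd}$, the recursion (\ref{eq2}) becomes \emph{exactly linear}:
\[
\tilde{\mathbf{X}}(t+1)=\mathbf{M}\,\tilde{\mathbf{X}}(t),\qquad \mathbf{M}:=\mathbf{\Pi}\,(W\otimes\Ib_d),
\]
where $\mathbf{\Pi}=\diag(\Pi_1,\dots,\Pi_n)$ is block-diagonal and $W=\Ib_n-\eps L$, with $L$ the graph Laplacian of $\mathrm{G}$. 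The whole problem thus reduces to analysing the powers $\mathbf{M}^t$. Since $\mathcal{P}_\ast$ is also affine and hence commutes with averaging, the target limit can be rewritten as $\frac1n\sum_k\mathcal{P}_\ast(\mathbf{x}_k(0))=\mathcal{P}_\ast(\bar{\mathbf{x}}(0))$ with $\bar{\mathbf{x}}(0)=\frac1n\sum_k\mathbf{x}_k(0)$, so in error coordinates the claim is $\tilde{\mathbf{X}}(t)\to\1_n\otimes\Pi_\ast(\bar{\tilde{\mathbf{x}}}(0))$, where $\Pi_\ast$ projects onto $V=\bigcap_i V_i$.

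First I would record the spectral facts about $W$. Since $\mathrm{G}$ is connected, $L$ is symmetric positive semidefinite with a simple zero eigenvalue (eigenvector $\1_n$) and $\lambda_{\max}(L)\le n$; the bound $0<\eps<1/n$ then makes $W$ symmetric with all eigenvalues in $(0,1]$, the eigenvalue $1$ being simple with eigenspace $\spa(\1_n)$. Consequently $\|W\|_2=1$ and $\|\mathbf{M}\|_2\le\|\mathbf{\Pi}\|_2\,\|W\otimes\Ib_d\|_2\le1$, so $\|\mathbf{M}^t\|_2\le1$ for all $t$.

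The core step is to determine the peripheral spectrum of $\mathbf{M}$: I would show that the only eigenvalue of $\mathbf{M}$ on the unit circle is $1$, that it is semisimple, and that its eigenspace is precisely $C:=\{\1_n\otimes\mathbf{w}:\mathbf{w}\in V\}$. Semisimplicity of every unimodular eigenvalue is automatic from $\|\mathbf{M}^t\|_2\le1$ (a nontrivial Jordan block would force $\|\mathbf{M}^t\|$ to grow). For the location, suppose $\mathbf{M}\mathbf{v}=\lambda\mathbf{v}$ with $|\lambda|=1$; then $\|\mathbf{v}\|=\|\mathbf{\Pi}(W\otimes\Ib_d)\mathbf{v}\|\le\|(W\otimes\Ib_d)\mathbf{v}\|\le\|\mathbf{v}\|$ forces both inequalities to be equalities. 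Equality in the second one, together with the fact that $W\otimes\Ib_d$ is symmetric with spectrum in $(0,1]$, forces $\mathbf{v}\in\1_n\otimes\R^d$, say $\mathbf{v}=\1_n\otimes\mathbf{w}$; the eigen-equation then reads $\Pi_i\mathbf{w}=\lambda\mathbf{w}$ for every $i$, and since each $\Pi_i$ has spectrum $\{0,1\}$ we get $\lambda=1$ and $\mathbf{w}\in\bigcap_i V_i=V$, i.e. $\mathbf{v}\in C$. Hence $\mathbf{M}^t$ converges to the spectral projection onto $C$.

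It remains to identify that limit with the claimed operator $Q:=\frac1n(\1_n\1_n^\top)\otimes\Pi_\ast$. I would verify directly that $Q^2=Q$, that $\range(Q)=C$, and the two invariance identities $\mathbf{M}Q=Q$ and $Q\mathbf{M}=Q$. The first follows because $\mathbf{M}$ fixes $C$ pointwise. The second rests on the algebraic fact $\Pi_\ast\Pi_j=\Pi_\ast$ for every $j$, which holds because $V\subseteq V_j$ implies $V_j^\perp\subseteq V^\perp$, so that $(\Ib_d-\Pi_j)$ has range orthogonal to $V$; combined with $\1_n^\top W=\1_n^\top$ this yields $Q\mathbf{M}=Q$. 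Since the eigenvalue $1$ is semisimple with eigenspace $C$ and all remaining eigenvalues have modulus $<1$, these properties characterise $Q$ as exactly the spectral projection onto $C$ along $\range(\mathbf{M}-\Ib)$ (note $Q\mathbf{M}=Q$ gives $Q(\mathbf{M}-\Ib)=0$, and a dimension count closes the identification), whence $\mathbf{M}^t\to Q$. Applying this to $\tilde{\mathbf{X}}(0)$ gives $\tilde{\mathbf{x}}_i(t)\to\Pi_\ast(\bar{\tilde{\mathbf{x}}}(0))$ for every $i$, i.e. $\mathbf{x}_i(t)\to\mathcal{P}_\ast(\bar{\mathbf{x}}(0))=\frac1n\sum_k\mathcal{P}_\ast(\mathbf{x}_k(0))\in\mathcal{E}$, as required. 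I expect the delicate point to be the peripheral-spectrum analysis — in particular ruling out other unimodular eigenvalues and the non-normality of $\mathbf{M}=\mathbf{\Pi}(W\otimes\Ib_d)$, which prevents a one-line symmetric-matrix argument and forces the two-sided characterisation of the limit via $\mathbf{M}Q=Q\mathbf{M}=Q$.
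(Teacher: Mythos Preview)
Your argument is correct, but it takes a genuinely different route from the paper's. The paper does not prove convergence from scratch: it invokes Lemma~3 of \cite{nedic2010constrainedTAC} to obtain that all $\mathbf{x}_i(t)$ converge to a common point in $\mathcal{E}$, and then identifies that point via an \emph{invariance} argument, showing (using Lemma~5 of \cite{shi2017networkTAC}) that $\sum_{i=1}^n\mathcal{P}_\ast(\mathbf{x}_i(t))$ is conserved along the recursion; since the common limit lies in $\mathcal{E}$, continuity forces it to equal $\frac{1}{n}\sum_k\mathcal{P}_\ast(\mathbf{x}_k(0))$. Your approach instead linearizes in error coordinates and carries out a self-contained peripheral-spectrum analysis of $\mathbf{M}=\mathbf{\Pi}(W\otimes\mathbf{I}_d)$, then identifies the limiting spectral projector as $Q=\tfrac{1}{n}(\mathbf{1}_n\mathbf{1}_n^\top)\otimes\Pi_\ast$ via $\mathbf{M}Q=Q\mathbf{M}=Q$ and a dimension count.

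The two proofs share the same algebraic kernel: your identity $\Pi_\ast\Pi_j=\Pi_\ast$ (used for $Q\mathbf{M}=Q$) is exactly the linear version of the paper's $\mathcal{P}_\ast\circ\mathcal{P}_i=\mathcal{P}_\ast$ that underlies the conserved-sum computation. What your approach buys is self-containment (no reliance on the constrained-consensus literature) and, incidentally, it anticipates the spectral analysis the paper itself performs later in the proof of Lemma~\ref{lem7}. What the paper's approach buys is brevity and modularity: convergence is outsourced, and the limit identification is a two-line invariance trick that avoids discussing the non-normality of $\mathbf{M}$ or characterising the spectral projector.
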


The projection consensus algorithm (\ref{eq2}) is in fact a special case of the constrained consensus algorithm presented in  \cite{nedic2010constrainedTAC} for distributedly computing a point in the intersection of several convex sets, and   the convergence statement in Lemma \ref{prop2} follows from Lemma 3 of \cite{nedic2010constrainedTAC}. This algorithm (\ref{eq2}) is a form of alternating projection algorithms for convex feasibility problems \cite{apa}. The rate of convergence is in fact exponential.
The specific representation of the node state  limit can be established via Lemma 5 in \cite{shi2017networkTAC}, based on which we can obtain
\begin{align}\label{3}
\sum_{i=1}^n\mathcal{P}_\ast(\mathbf{x}_i(t+1))&=\sum_{i=1}^n\mathcal{P}_\ast\Big(\mathbf{x}_i(t)+\epsilon \sum_{j\in\mathrm{N}_i}\big(\mathbf{x}_j(t)- \mathbf{x}_i(t)\big)\Big)\nonumber\\
&=\sum_{i=1}^n \sum_{j=1}^n W_{ij} \mathcal{P}_\ast (\mathbf{x}_j(t))\nonumber\\
&=\sum_{i=1}^n\mathcal{P}_\ast(\mathbf{x}_i(t))
\end{align}
for all $t\geq 0$ along (\ref{eq2}). Here $W_{ij}= \epsilon$ for all $\{i,j\}\in\mathrm{E}$, $W_{ii}= 1-|\mathrm{N}_i|\epsilon$ for all $i\in\mathrm{V}$, and $W_{ij}=0$ otherwise. Invoking Lemma 3 of \cite{nedic2010constrainedTAC}, we know that all $\mathbf{x}_i(t)$ converge to a common limit value in $\mathcal{E}$, which has to be $\sum_{k=1}^n\mathcal{P}_\ast (\mathbf{x}_k(0))/n$ from (\ref{3}).

\begin{remark}
Without the projection in (\ref{eq2}), the algorithm
\begin{align}\label{consensus}
\mathbf{x}_i(t+1)=\mathbf{x}_i(t)+\epsilon \sum_{j\in\mathrm{N}_i}\big(\mathbf{x}_j(t)- \mathbf{x}_i(t)\big)
\end{align}
is a standard average consensus algorithm \cite{magnusbook}. If $0<\epsilon<1/n$, then along (\ref{consensus}) there holds
$\lim_{t\to\infty}x_i(t)= \sum_{j=1}^n x_j(0)/n$ for all $i=1,\dots,n$.
\end{remark}Besides the algorithm (\ref{eq2}),  there are various other distributed algorithms that produce a solution to (\ref{LAE}), e.g.,   \cite{mou2015,shi2017networkTAC}. We denote by ${\sf DistributedLAE}$ an algorithm that solves a linear algebraic  equation  (\ref{LAE}) with exact solutions  distributedly over a graph $\mathrm{G}$, and formally we write
$$
\mathbf{y}_\ast (\mathbf{x}(0))={\sf DistributedLAE}\big(\mathcal{E}_i, \mathbf{x}_i(0):i\in \mathrm{V}\big)=\sum_{k=1}^n\mathcal{P}_\ast (\mathbf{x}_k(0))/n
$$

\subsection{Boolean Vector Search in an Affine Subspace}

We term the vectors in $\Delta_{2^m}$ as {\it Boolean vectors} in $\mathbb{R}^{2^m}$ as noted from their close relationship with the logical states in $\{0,1\}^m$. In this subsection, we provide an algorithm to search the Boolean vectors in any affine subspace of $\mathbb{R}^{2^m}$.

Let $\yb_1,\dots,\yb_k \in \mathbb{R}^{2^m}$.  Let ${\sf
  SubSpace}(\yb_1, \ldots, \yb_k)$ be the subspace generated by
$\yb_1, \ldots, \yb_k$.  By definition, ${\sf SubSpace}(\yb_1, \ldots,
\yb_k) := \{ \alpha_1 \yb_1 + \cdots + \alpha_k \yb_k \ | \ \alpha_1,
\ldots, \alpha_k \in \R \}$.   Let ${\sf Aff}(\yb_1,\dots,\yb_k)$ be the generated affine subspace
from $\yb_1,\dots,\yb_k$ in $\mathbb{R}^{2^m}$, as the minimal affine
subspace that contains all $\yb_i$.
Geometrically, the affine subspace ${\sf Aff}(\yb_1, \ldots, \yb_k)$
can be obtained by translating ${\sf SubSpace}(\yb_2 - \yb_1, \ldots,
\yb_k - \yb_1)$ from the origin to the point $\yb_1$.  Therefore, a
vector $\yb$ is contained in ${\sf Aff}(\yb_1, \ldots, \yb_k)$ if and
only if $\yb_1 - \yb$ is contained in ${\sf SubSpace}(\yb_2 - \yb_1,
\ldots, \yb_k - \yb_1)$.
The dimension of the affine subspace ${\sf Aff}(\yb_1, \ldots, \yb_k)$
is defined as the dimension of the subspace ${\sf SubSpace}(\yb_2 -
\yb_1, \ldots, \yb_k- \yb_1)$.  We are interested in the following Boolean vector search problem.

\medskip

\noindent{\bf Boolean Vector Search.}  Suppose
${\sf   Aff}(\yb_1,\dots,\yb_k)\bigcap\Delta_{2^m}\neq \emptyset$. Find all vectors in ${\sf   Aff}(\yb_1,\dots,\yb_k)\bigcap\Delta_{2^m}$.

\medskip

 Without loss of generality, we assume ${\sf
  SubSpace}(\yb_2-\yb_1, \ldots, \yb_k-\yb_1)$ has a basis $\{\vb_1,
\ldots, \vb_b\}$ for $b\ge 1$, where the matrix $[\vb_1, \ldots, \vb_b]$ has the form
$$
\left[\begin{matrix} \mathbf{V} \\ \mathbf{I}_b \end{matrix} \right].
$$  To
decide whether the vector $\yb$ is contained in the affine space ${\sf
  Aff}(\yb_1, \ldots, \yb_k)$ or not is equivalent to determining whether the
equation system $x_1 \vb_1 + \cdots + x_b \vb_b = \yb_1 -\yb$ has
soluitons or not.  Due to the special structure of the coefficient
matrix, the variables $x_1, \ldots, x_b$ are uniquely associated with
the last $b$ elements of the vector $\yb_1 -\yb$.  Based on these
observations, we propose the following algorithm.

\begin{algorithm}[H]
  \caption{{\sf BooleanVectorSearch}}
  \label{algo:findunitvector}
  \begin{algorithmic}[1]
    \REQUIRE{A set of generators $\yb_1, \ldots, \yb_k$ of an
    affine subspace;}
    \ENSURE{All of the Boolean vectors contained in this affine
    subspace.}
    \STATE Compute a basis $\{\vb_1, \ldots, \vb_b\}$ of the linear
    subspace generated by $\{\yb_2-\yb_1, \ldots, \yb_k-\yb_1\}$.
    Without loss of generality, we assume the last $b$ rows of the matrix $[\vb_1,
      \ldots, \vb_b]$ is the $b\times b$ identity matrix.
    \label{ln:basis}
    \STATE For $i \in [1, b]$, let $\widetilde{\vb}_i$ and
    $\widetilde{\yb}_1$ be the column vectors consisting of the first
    $2^m-b$ rows of $\vb_i$ and $\yb_1$, respectively.  Let
    $\overline{\yb}_1$ be the column vector consisting of the last $b$
    rows of $\yb_1$.
    \label{ln:partition}
    \STATE Let $\yb = \widetilde{\yb}_1 - [\widetilde{\vb}_1,
    \ldots, \widetilde{\vb}_b] \overline{\yb}_1$. Let $S$ be an empty
    set.
    \label{ln:computey}
    \STATE If $\yb = \delta_{2^m-b}^i$ for some $i\in [1, 2^m-b]$,
    then add $\delta_{2^m}^i$ to $S$.
    \label{ln:ei}
    \STATE For $i = 2^m-b+1, \ldots, 2^m$, if
    $-\widetilde{\vb}_{i-2^m+b} = \yb$, then add $\delta_{2^m}^{i}$ to
    $S$.
    \label{ln:vi}
    \RETURN $S$.
  \end{algorithmic}
\end{algorithm}

\begin{lemma}\label{le:correctnessofalgo1}
  Algorithm \ref{algo:findunitvector} is correct and terminates.
\end{lemma}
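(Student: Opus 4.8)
The plan is to establish correctness by proving its two halves—soundness (every vector the algorithm places in $S$ is a Boolean vector lying in the affine subspace) and completeness (every Boolean vector in the subspace is placed in $S$)—by reducing membership in ${\sf Aff}(\yb_1,\dots,\yb_k)$ to the solvability of a single structured linear system, and then reading off the consistency condition from the block form guaranteed in Line~\ref{ln:basis}. Termination will follow immediately once correctness is in place, since every line is either a finite linear-algebra routine or a loop of length at most $2^m$.

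First I would recall the characterization already derived in the text: a Boolean vector $\delta_{2^m}^j$ lies in ${\sf Aff}(\yb_1,\dots,\yb_k)$ if and only if $\yb_1-\delta_{2^m}^j\in{\sf SubSpace}(\vb_1,\dots,\vb_b)$, that is, if and only if the system $\sum_{\ell=1}^b x_\ell \vb_\ell=\yb_1-\delta_{2^m}^j$ admits a solution $\xb=(x_1,\dots,x_b)^\top$. Writing $\mathbf{V}=[\widetilde{\vb}_1,\dots,\widetilde{\vb}_b]$ and using the block form $[\vb_1,\dots,\vb_b]=\bigl[\begin{smallmatrix}\mathbf{V}\\ \mathbf{I}_b\end{smallmatrix}\bigr]$, the system splits into $\mathbf{V}\xb=\widetilde{\yb}_1-(\delta_{2^m}^j)_{\mathrm{top}}$ on the first $2^m-b$ rows and $\xb=\overline{\yb}_1-(\delta_{2^m}^j)_{\mathrm{bot}}$ on the last $b$ rows, where $(\cdot)_{\mathrm{top}}$ and $(\cdot)_{\mathrm{bot}}$ denote the top $2^m-b$ and bottom $b$ coordinates. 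The crucial consequence of the identity block is that the lower equations determine $\xb$ uniquely, so after substitution solvability reduces to a single equality in the upper block, namely a test against the vector $\yb=\widetilde{\yb}_1-\mathbf{V}\overline{\yb}_1$ computed in Line~\ref{ln:computey}.

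Next I would carry out the two-case analysis that matches Lines~\ref{ln:ei} and~\ref{ln:vi}. If $j\le 2^m-b$ then $(\delta_{2^m}^j)_{\mathrm{bot}}=\mathbf{0}$, forcing $\xb=\overline{\yb}_1$, and the upper block becomes $\mathbf{V}\overline{\yb}_1=\widetilde{\yb}_1-\delta_{2^m-b}^j$, i.e. $\delta_{2^m-b}^j=\yb$, which is exactly the test in Line~\ref{ln:ei}. If instead $j=2^m-b+\ell$ with $\ell\in[1,b]$, then $(\delta_{2^m}^j)_{\mathrm{top}}=\mathbf{0}$ and $(\delta_{2^m}^j)_{\mathrm{bot}}=\delta_b^\ell$, forcing $\xb=\overline{\yb}_1-\delta_b^\ell$; substituting into the upper block and using $\mathbf{V}\delta_b^\ell=\widetilde{\vb}_\ell$ gives $\widetilde{\yb}_1-\mathbf{V}\overline{\yb}_1=-\widetilde{\vb}_\ell$, i.e. $\yb=-\widetilde{\vb}_\ell$, which is exactly the test in Line~\ref{ln:vi} under the index identification $i-2^m+b=\ell$. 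Since each $j\in[1,2^m]$ falls into exactly one case and the associated test is both necessary and sufficient for membership, $S$ coincides with ${\sf Aff}(\yb_1,\dots,\yb_k)\cap\Delta_{2^m}$, delivering soundness and completeness at once.

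The step I expect to require the most care is justifying the ``without loss of generality'' in Line~\ref{ln:basis}: a $b$-dimensional subspace always admits a basis whose coefficient matrix contains a $b\times b$ identity block (pick $b$ pivot rows and reduce), but those pivot rows need not be the last $b$ rows, so in general one must permute the coordinates of $\R^{2^m}$ to move them there. I would make this explicit by fixing the induced permutation $\pi$ of $\{1,\dots,2^m\}$, running the argument above in the permuted coordinates, and then observing that $\pi$ carries $\Delta_{2^m}$ bijectively onto itself, so that each output index $i$ must be sent back through $\pi^{-1}$ to name the correct Boolean vector. Once this bookkeeping is recorded, the remaining obligations are routine: Line~\ref{ln:basis} terminates by Gaussian elimination, and Lines~\ref{ln:computey}--\ref{ln:vi} perform only finitely many vector comparisons, so the algorithm halts.
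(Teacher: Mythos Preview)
Your proposal is correct and follows essentially the same approach as the paper: both split the linear system $\sum_\ell x_\ell \vb_\ell = \yb_1 - \delta_{2^m}^j$ into upper and lower blocks using the identity sub-block, read off $\xb$ uniquely from the lower rows, and then check consistency of the upper block in the two cases $j\le 2^m-b$ and $j>2^m-b$, arriving at exactly the tests in Lines~\ref{ln:ei} and~\ref{ln:vi}. Your write-up is in fact more careful than the paper's, since you explicitly justify the ``without loss of generality'' placement of the identity block via a coordinate permutation that the paper simply assumes.
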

\begin{proof}
  The Lines
  \ref{ln:basis} and \ref{ln:partition} of Algorithm
  \ref{algo:findunitvector} lead us to solve the following equation
  system with variables $x_1, \ldots, x_b$:
  \begin{equation}\label{eq:equations}
    \left[\begin{matrix}
        \begin{matrix}
          \widetilde{\vb}_1 & \ldots & \widetilde{\vb}_b
        \end{matrix} \\
        \Ib_{b}
      \end{matrix} \right ]
    \left [ \begin{matrix}
        x_1 \\
        \vdots \\
        x_b
      \end{matrix} \right ]
    = \left[ \begin{matrix}
        \widetilde{\yb}_1 \\
        \overline{\yb}_1
      \end{matrix} \right] - \delta_{2^m}^i,
    \text{ for } i: 1 \le i \le 2^m.
  \end{equation}
  We disscuss Equation \ref{eq:equations} in the following two cases:
  \begin{enumerate}
  \item $1 \le i \le 2^m -b$\newline
    In this case, $(x_1, \ldots,x_b)^t = \overline{\yb}_1$. We need to
    determine whether $\delta_{2^m-b}^i = \widetilde{\yb}_1 -
    [\widetilde{\vb}_1, \ldots, \widetilde{\vb}_b] \overline{\yb}_1$
    or not. This leads us to the Line \ref{ln:ei}.
  \item $2^m-b+1 \le i \le 2^m$ \newline
    In this case, for a fixed $i$, $(x_1, \ldots, x_b)^t =
    \overline{\yb}_1 - \delta_{b}^{i-2^m+b}$. We need to determine
    whether $-\widetilde{\vb}_{i-2^m+b} =  \widetilde{\yb}_1 -
    [\widetilde{\vb}_1, \ldots, \widetilde{\vb}_b] \overline{\yb}_1$
    or not. This leads us to the Line \ref{ln:vi}.
  \end{enumerate}
  The correctness of the algorithm follows immediately.
  The algorithm terminates obviously.
\end{proof}

\begin{lemma}\label{le:requiredoperations4algo1}
  Algorithm \ref{algo:findunitvector} requires at most $O(2^mkb)$
  field operations, where $b$ is the dimension of the affine subspace.
\end{lemma}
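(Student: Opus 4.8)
The plan is to bound the number of field operations incurred by each line of Algorithm~\ref{algo:findunitvector} separately and then observe that every contribution is dominated by the basis computation in Line~\ref{ln:basis}. The single structural fact I would lean on throughout is that the dimension satisfies $b \le k-1$: indeed $\{\vb_1,\dots,\vb_b\}$ is by construction a basis of the span of the $k-1$ vectors $\yb_2-\yb_1,\dots,\yb_k-\yb_1$, so $b\le k-1$, and of course $b\le 2^m$ as well. This inequality is what lets me fold any $O(2^m b^2)$ term into the advertised $O(2^m k b)$ bound.

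First I would analyze Line~\ref{ln:basis}, which I expect to be the dominant step, splitting it into three sub-tasks. (a) Forming the $k-1$ difference vectors $\yb_j-\yb_1$ in $\R^{2^m}$ costs $(k-1)2^m=O(2^m k)$ subtractions. (b) Extracting a basis of their span by row reduction of the resulting $2^m\times(k-1)$ coefficient matrix: the procedure produces $b$ pivots, and each pivot elimination sweeps $O(2^m)$ rows with an update of $O(k)$ entries per row, for a total of $O(2^m k b)$ operations. (c) Normalizing the basis so that the last $b$ rows of $[\vb_1,\dots,\vb_b]$ equal $\Ib_b$: since the rank is $b$, a row permutation places a nonsingular $b\times b$ block in the bottom rows; inverting that block costs $O(b^3)$ and right-multiplying the $2^m\times b$ basis matrix by the inverse costs $O(2^m b^2)$. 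Invoking $b\le k$, both of these are $O(2^m k b)$, so Line~\ref{ln:basis} runs in $O(2^m k b)$ overall.

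Next I would dispatch the remaining lines, all of which are strictly cheaper. Line~\ref{ln:partition} merely reads off the sub-vectors $\widetilde{\vb}_i$, $\widetilde{\yb}_1$, $\overline{\yb}_1$ and performs no arithmetic. Line~\ref{ln:computey} forms the product of the $(2^m-b)\times b$ matrix $[\widetilde{\vb}_1,\dots,\widetilde{\vb}_b]$ with the length-$b$ vector $\overline{\yb}_1$ at cost $O(2^m b)$, then subtracts from $\widetilde{\yb}_1$ at cost $O(2^m)$. Line~\ref{ln:ei} tests whether $\yb$ coincides with some $\delta_{2^m-b}^i$, i.e.\ whether $\yb$ is a single unit entry, which is one scan of $O(2^m)$ comparisons. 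Line~\ref{ln:vi} performs $b$ iterations, each comparing the two length-$(2^m-b)$ vectors $-\widetilde{\vb}_{i-2^m+b}$ and $\yb$ at cost $O(2^m)$, hence $O(2^m b)$ in total. Thus every line other than Line~\ref{ln:basis} contributes at most $O(2^m b)$.

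Summing the bounds yields $O(2^m k b)+O(2^m b)=O(2^m k b)$, which is the claim. The only point I anticipate needing care is sub-task (c): the phrase ``without loss of generality'' in Line~\ref{ln:basis} silently performs a change of basis, and the subtlety is precisely that its $O(2^m b^2)$ cost is absorbed into the target bound only because $b\le k-1$. Every other estimate is a routine operation count once the sizes of the vectors and matrices involved are tracked, so this normalization step is the sole place where the argument is not immediate.
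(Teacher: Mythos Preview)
Your proposal is correct and follows essentially the same approach as the paper: both identify the Gaussian-elimination basis computation in Line~\ref{ln:basis} as the dominant $O(2^m k b)$ cost and bound the remaining lines (chiefly Line~\ref{ln:computey}) by $O(2^m b)$. Your treatment is in fact more careful than the paper's---you explicitly account for the ``without loss of generality'' normalization and for Lines~\ref{ln:ei}--\ref{ln:vi}, whereas the paper simply invokes column reduced echelon form of $[\yb_1,\dots,\yb_k]$ to absorb the normalization and leaves the final two lines implicit.
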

\begin{proof}
The column reduced echelon form of $[\yb_1, \ldots, \yb_k]$ can be
obtained by Gauss elimination.
The classical Gauss elimination requires $O(2^mkb)$ field operations,
where $b$ is the rank of $[\yb_1, \ldots, \yb_k]$.
Once we have the column reduced echelon form, the only remaining
computations lie in Line \ref{ln:computey}, which requires at most
$O(2^mb)$ field operations.
The lemma follows immediately.
\end{proof}

We write  {\sf BooleanVectorSearch} as our algorithm that outputs all points in the intersection of  ${\sf
  Aff}(\yb_1,\dots,\yb_k)$ and $\Delta_{2^m}$ from $\yb_1,\dots,\yb_k \in \mathbb{R}^{2^m}$ as the input,  i.e.,
$$
{\sf BooleanVectorSearch}(\yb_1,\dots,\yb_k)={\sf
  Aff}(\yb_1,\dots,\yb_k)\mcap \Delta_{2^m}.
$$

\section{Distributed Boolean Equation Solvers}\label{secsolver}
In this section, we present our algorithms that solve the system of Boolean equations distributedly  based on exact or approximate projection consensus algorithms, given satisfiability of the Boolean equations.

\subsection{The Algorithm with Exact Projection Consensus}

We first present a distributed algorithm for solving the system of Boolean equations  (\ref{bes}) utilizing the three algorithms
{\sf BooleanMatricization},   {\sf DistributedLAE}, {\sf BooleanVectorSearch} as subroutines.  Let ${\sf uniform}([0,1]^{2^m})$ be the uniform distribution over the $2^m$-dimensional cube $[0,1]^{2^m}$.  Let $k_\ast$ be a positive integer.

\begin{algorithm}[ht]
  \caption{{\sf DistributedBooleanEquationSolver}}
  \label{algo:dbes}
  \begin{algorithmic}[1]
    \REQUIRE{Over the network $\mathrm{G}$,  node $i$ holds Boolean equation $f_i(x)=\sigma_i$; nodes communicate with only neighbors on $\mathrm{G}$ about their dynamical states.}
    \ENSURE{Each node $i$ computes $\{x\in\{0,1\}^m: f_j(x)=\sigma_j, \ j=1,\dots,n\}$. }
    \STATE Each node $i$ locally  computes $\mathbf{M}_{f_i}={\sf BooleanMatricization}(f_i)\in \mathbb{R}^2\times\mathbb{R}^{2^m}$  for all $i\in \mathrm{V}$;

     \STATE Each node $i$ assigns linear equation  $\mathscr{E}_i^{\rm b}: \mathbf{H}_i \mathbf{y}=\mathbf{z}_i$ and then the projection onto its solution space $\mathcal{E}_i^{\rm b}$ by $\mathbf{H}_i \leftarrow \mathbf{M}_{f_i}$ and $\mathbf{z}_i\leftarrow\Theta_1(\sigma_i)$;
    \STATE For $s=1,\dots, k_\ast $, each node $i$ randomly and independently  selects $\mathbf{x}_i(0)=\bm{\beta}_{i,s}\sim {\sf uniform}([0,1]^{2^m})$, and runs {\sf DistributedLAE} to produce
     $\mathbf{y}_s=\sum_{i=1}^n\mathcal{P}_\ast (\bm{\beta}_{i,s})/n$ at each node $i\in\mathrm{V}$;
    \STATE Each node $i$ locally computes  $S={\sf BooleanVectorSearch}(\yb_1,\dots,\yb_{k_\ast})$;
      \RETURN $\mathcal{S}= \Upsilon_m (S)$ for all nodes.
  \end{algorithmic}
\end{algorithm}

The intuition behind Algorithm \ref{algo:dbes} is as follows. First of all, the algorithm {\sf BooleanMatricization} manages to convert each Boolean equation locally  into the following network linear algebraic equation:
\begin{align}\label{induced}
\mathscr{E}^{\rm b}: \mathbf{M}_{f_i} \mathbf{y}=\Theta_1(\sigma_i),\ \ i=1,\dots,n
\end{align}
which admits a non-empty affine solution subspace $\mathcal{E}^{\rm b}=\mcap_{i=1}^n \mathcal{E}^{\rm b}_i$ when the system of  Boolean equations (\ref{bes})
is solvable.  Then, for any given initial values $(\bm{\beta}_{1,s},\dots,\bm{\beta}_{n,s})$, the algorithm {\sf DistributedLAE} produces an exact algebraic solution $\mathbf{y}_s=\sum_{i=1}^n\mathcal{P}_\ast (\bm{\beta}_{i,s})/n$ to \eqref{induced} in a distributed manner, where with slight abuse of notation $\mathcal{P}_\ast$ is the projection onto $\mathcal{E}^{\rm b}$. However, a single  algebraic solution cannot be used to infer any  solution to the original Boolean equations in the logical space. To overcome this, we randomly select initial values of the nodes for {\sf DistributedLAE} to produce $k_*$ algebraic solutions, in the hope that these random algebraic solutions are useful enough to reconstruct the entire solution affine space of the linear equations.  Finally, by {\sf BooleanVectorSearch} we  search all Boolean vectors in that affine space, from which we eventually uncover the solution set of the Boolean equations by the mapping $\Upsilon_m(\cdot)$.

For the correctness of the Algorithm \ref{algo:dbes}, we present the following theorem.

\begin{theorem}\label{thm1} Suppose the system of  Boolean equations (\ref{bes}) admits at least one exact solutions. Let $k_\ast=2^m+1$.
 Then with probability one,  the set $\mathcal{S}$ that the algorithm {\sf DistributedBooleanEquationSolver} returns is exactly  the set of solutions to (\ref{bes}).
\end{theorem}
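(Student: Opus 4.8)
The plan is to reduce the theorem to a single statement about the lifted linear system and then settle that statement by a genericity argument. First I would record the exact correspondence between the two solution sets. By Lemma \ref{prop1}, for each $i$ we have $\mathbf{M}_{f_i}\Theta_m(x)=\Theta_1(f_i(x))$, so, since $\Theta_1$ is a bijection, $x\in\{0,1\}^m$ solves (\ref{bes}) if and only if $\mathbf{M}_{f_i}\Theta_m(x)=\Theta_1(\sigma_i)$ for every $i$, i.e. if and only if $\Theta_m(x)\in\mathcal{E}^{\rm b}$. As $\Theta_m$ maps $\{0,1\}^m$ bijectively onto $\Delta_{2^m}$ with inverse $\Upsilon_m$, this gives a bijection between the solution set of (\ref{bes}) and $\mathcal{E}^{\rm b}\cap\Delta_{2^m}$. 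Meanwhile, by the definition of {\sf BooleanVectorSearch} and Lemma \ref{le:correctnessofalgo1}, the algorithm returns $\mathcal{S}=\Upsilon_m\big({\sf Aff}(\yb_1,\dots,\yb_{k_\ast})\cap\Delta_{2^m}\big)$. Hence it suffices to prove that, with probability one,
\[
{\sf Aff}(\yb_1,\dots,\yb_{k_\ast})\cap\Delta_{2^m}=\mathcal{E}^{\rm b}\cap\Delta_{2^m}.
\]

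Next I would dispose of the trivial inclusion. Each $\yb_s=\sum_{i=1}^n\mathcal{P}_\ast(\bm{\beta}_{i,s})/n$ is an affine combination of points of the affine set $\mathcal{E}^{\rm b}$ (each $\mathcal{P}_\ast(\bm{\beta}_{i,s})\in\mathcal{E}^{\rm b}$ by Lemma \ref{prop2}), so $\yb_s\in\mathcal{E}^{\rm b}$; as $\mathcal{E}^{\rm b}$ is affine and contains all the $\yb_s$, we get ${\sf Aff}(\yb_1,\dots,\yb_{k_\ast})\subseteq\mathcal{E}^{\rm b}$ deterministically, and therefore ``$\subseteq$'' in the displayed identity always holds. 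The entire content of the theorem is thus the reverse inclusion, which follows once I show ${\sf Aff}(\yb_1,\dots,\yb_{k_\ast})=\mathcal{E}^{\rm b}$ with probability one, i.e. that the random solutions $\yb_1,\dots,\yb_{k_\ast}$ affinely span the whole solution affine space.

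For the probabilistic core, write $\mathcal{E}^{\rm b}=\mathbf{c}+\mathcal{L}$ with direction space $\mathcal{L}$ and $\mathbf{c}=\mathcal{P}_\ast(\mathbf{0})$, so that $\mathcal{P}_\ast(\mathbf{v})=\Pi\mathbf{v}+\mathbf{c}$ for the orthogonal projector $\Pi$ onto $\mathcal{L}$. By linearity, $\yb_s=\Pi\big(\tfrac1n\sum_{i=1}^n\bm{\beta}_{i,s}\big)+\mathbf{c}=\mathcal{P}_\ast(\bar{\bm{\beta}}_s)$ where $\bar{\bm{\beta}}_s$ is the average of the i.i.d. uniform vectors $\bm{\beta}_{1,s},\dots,\bm{\beta}_{n,s}$. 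The average $\bar{\bm{\beta}}_s$ is absolutely continuous with respect to Lebesgue measure on $\mathbb{R}^{2^m}$, and the pushforward of an absolutely continuous measure under the linear surjection $\Pi:\mathbb{R}^{2^m}\to\mathcal{L}$ is absolutely continuous with respect to Lebesgue measure on $\mathcal{L}$ (decompose along $\ker\Pi\oplus\mathcal{L}$ and integrate out the kernel). Consequently each $\yb_s$ is, independently, absolutely continuous with respect to the $\dim\mathcal{E}^{\rm b}$-dimensional Lebesgue measure on $\mathcal{E}^{\rm b}$, and the decisive consequence is that every proper affine subspace of $\mathcal{E}^{\rm b}$ is a null set for the law of $\yb_s$.

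Finally I would run the standard general-position argument. Let $r=\dim\mathcal{E}^{\rm b}\le 2^m$. Conditioning successively, given that ${\sf Aff}(\yb_1,\dots,\yb_j)$ has dimension $j-1<r$ it is a proper affine subspace of $\mathcal{E}^{\rm b}$, hence null, so almost surely $\yb_{j+1}$ does not lie in it and the dimension strictly increases; after $r+1$ points the affine hull has dimension $r$ and thus equals $\mathcal{E}^{\rm b}$. Since $k_\ast=2^m+1\ge r+1$ regardless of the (unknown) value of $r$, the first $r+1$ of the $\yb_s$ already span $\mathcal{E}^{\rm b}$ with probability one, which gives ${\sf Aff}(\yb_1,\dots,\yb_{k_\ast})=\mathcal{E}^{\rm b}$ almost surely and completes the proof. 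I expect the main obstacle to be the middle step: rigorously establishing that the law of each $\yb_s$ charges no proper affine subspace of $\mathcal{E}^{\rm b}$, i.e. the absolute continuity of the projected average on the lower-dimensional solution space; the choice $k_\ast=2^m+1$ is exactly what makes the final spanning argument work uniformly over all possible solution dimensions $r$.
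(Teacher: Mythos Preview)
Your proposal is correct and follows essentially the same route as the paper: both reduce the theorem to showing ${\sf Aff}(\yb_1,\dots,\yb_{2^m+1})=\mathcal{E}^{\rm b}$ almost surely, establish the inclusion deterministically, write $\yb_s=\mathbf{P}_\ast\bar{\bm\beta}_s+\mathbf{a}_\ast$ for the averaged initial conditions, and then use a genericity argument. The only cosmetic difference is that the paper argues in one shot that the $2^m\times 2^m$ matrix $(\mathbf{l}_2-\mathbf{l}_1,\dots,\mathbf{l}_{2^m+1}-\mathbf{l}_1)$ has full rank a.s.\ and hence its $\mathbf{P}_\ast$-image has rank $\dim\mathcal{E}^{\rm b}$, whereas you project first and run the sequential general-position argument inside $\mathcal{E}^{\rm b}$; your version makes the role of the bound $k_\ast=2^m+1\ge r+1$ a bit more explicit, but the substance is the same.
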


Theorem \ref{thm1} suggests that, in order to return a correct output,  Algorithm 2 involves finding the solutions to a system of linear equations with $2^m$ unknowns and $2n$ equations. Multiplying the $\mathbf{H}^\top=[\mathbf{H}_1\dots\mathbf{H}_n]^\top$ from the left and right sides of that equation (which takes $O(4^m n)$ operations), we obtain a system of linear equation of $2^m$ unknowns with $2^m$ equations, whose computational complexity for centralized algorithms is $O\big((2^m)^3\big)$. From Lemma~\ref{le:requiredoperations4algo1}, the Boolean vector search step
takes $O\big((2^m)^3\big)$ steps as well in the worst case ($k=b=O(2^m)$). As a result, regardless of the distributedness of the algorithms for the linear equations, the scheme of taking the computations into the algebraic space has a conservative  computational complexity of the order $O(2^{3m}+2^{2m}n)$ in terms of field operations. Also, at each round of the recursion of the distributed linear algebraic equation solving, a $2^m$ dimensional vector is communicated along each communication link, leading to a significant communication complexity as well. On the other hand, the    linear algebraic equations have sparse structures; the same linear equation is solved for a number of times with different initial values. These factors may be explored to develop accelerated or parallel versions of Algorithm~\ref{algo:dbes}, potentially  reducing the cost from both the computation and communication sides.

\noindent{\bf Example 1}. We present an example illustrating the computation process of  Algorithm \ref{algo:dbes}. Let ``$\wedge$'', ``$\vee$'', and ``$\neg$'' be the logical ``AND'', ``OR", ``NOT" operations. Let ``$\leftrightarrow$" be the logical equivalence operation with $x\leftrightarrow y=(\neg x\vee y)\wedge(\neg y\vee x)$. Consider the following Boolean equations
	\begin{align}\label{ex:eq2}
	\begin{cases}
	f_1=x_1 \vee x_2 \vee \neg x_3 =1,\\
	f_2=x_1\wedge (x_1\leftrightarrow x_2) = 0,\\
	f_3=x_2 \wedge x_3 = 0.
	\end{cases}
	\end{align}
Let there be a network with three nodes in $\mathrm{V}=\{1,2,3\}$ with node $i$ holding the $i$th equation. 	A detailed breakdown of  Algorithm \ref{algo:dbes} is as follows.
	\begin{description}
		\item[S1.] Each node $i$ computes the ${\bf M}_{f_i}$ based on her local Boolean equation $f_i$ as
		\begin{align*}
		&{\bf M}_{f_1} = \begin{bmatrix}
		0&1&0&0&0&0&0&0\\
		1&0&1&1&1&1&1&1
		\end{bmatrix},\\
		&{\bf M}_{f_2} = \begin{bmatrix}
		1&1&1&1&1&1&0&0\\
		0&0&0&0&0&0&1&1
		\end{bmatrix},\\
		&{\bf M}_{f_3} = \begin{bmatrix}
		1&1&1&0&1&1&1&0\\
		0&0&0&1&0&0&0&1
		\end{bmatrix}.
		\end{align*}
		\item[S2.] Each node $i$ locally assigns $\mathscr{E}_i^b$: ${\bf H}_i{\bf y}={\bf z}_i$ by ${\bf H}_i\leftarrow {\bf M}_{f_i}$ and ${\bf z}_i\leftarrow\Theta_1(\sigma_i)$, where
		\begin{align*}
		{\bf z}_1=\Theta_1(1)=\begin{bmatrix}0\\1\end{bmatrix},
		{\bf z}_2=\Theta_1(0)=\begin{bmatrix}1\\0\end{bmatrix},
		{\bf z}_3=\Theta_1(0)=\begin{bmatrix}1\\0\end{bmatrix}.
		\end{align*}
		\item[S3.] Then for $s=1,\dots,2^3+1$, each node $i$ randomly and independently selects ${x}_i(0)=\beta_{i,s}$ from $\mathsf{Uniform}([0,1]^8)$, and runs $\mathsf{DistributedLAE}$ to produce ${\bf y}_s$. A sample obtained by such randomization for the ${\bf y}_s$ is
		$$
		\begin{aligned}
		 & ~[{\bf y}_1~\dots~{\bf y}_9]\\
		=&\begin{bmatrix}
		 0.3837& 0.0299&-0.0509& 0.4616& 0.2897& 0.1139& 0.1043& 0.3578& 0.0277\\
		0.0000& 0.0000& 0.0000& 0.0000& 0.0000& 0.0000& 0.0000& 0.0000& 0.0000\\
		0.1019& 0.4064& 0.2581& 0.1935& 0.3299& 0.2565& 0.4819& 0.3105& 0.1353\\
		0.0640& 0.0604& 0.1110& 0.1157& 0.0974& 0.0806& 0.1506& 0.0511& 0.0300\\
		0.0944& 0.1918& 0.3854& 0.2492&-0.1419& 0.1938& 0.0559& 0.2378& 0.3244\\
		0.3561& 0.3116& 0.2964&-0.0201& 0.4249& 0.3551& 0.2073& 0.0429& 0.4827\\
		0.0640& 0.0604& 0.1110& 0.1157& 0.0974& 0.0806& 0.1506& 0.0511& 0.0300\\
		-0.0640&-0.0604&-0.1110&-0.1157&-0.0974&-0.0806&-0.1506&-0.0511&-0.0300
		\end{bmatrix}.
		\end{aligned}
		$$
		\item[S4.] Each node then locally runs the algorithm $\mathsf{BooleanVectorSearch}$, which gives $S=\{\delta_8^1,\delta_8^3,\delta_8^5,\delta_8^6\}$.
		\item[S5.] Each node finally computes and returns $\mathcal{S}=\Upsilon_m(S)={\{[000],[010],[100],[101]\}}$.
	\end{description}
We can easily verify that $\mathcal{S}={\{[000],[010],[100],[101]\}}$ is indeed the solution set of the equation (\ref{ex:eq2}), which provides a validation to the correctness of the Algorithm \ref{algo:dbes}.

\subsection{Extended Algorithm with Approximate Projection Consensus}

In Algorithm \ref{algo:dbes}, the step for distributed linear equation solving assumes accurate computation of the projections, i.e., for $\mathbf{x}_i(0)=\bm{\beta}_{i,s}\sim {\sf uniform}([0,1]^{2^m})$, the algorithm {\sf DistributedLAE} produces
$\mathbf{y}_s=\sum_{i=1}^n\mathcal{P}_\ast (\bm{\beta}_{i,s})/n$ at each node $i\in\mathrm{V}$. However, based on Lemma \ref{prop2}, the convergence of {\sf DistributedLAE} is only asymptotic, and therefore on the face value, the accurate projection $\mathbf{y}_s=\sum_{i=1}^n\mathcal{P}_\ast (\bm{\beta}_{i,s})/n$ can only be obtained by an infinite number of steps. Suppose the {\sf DistributedLAE} only runs at each node for $T$ steps. Then for $\mathbf{x}_i(0)=\bm{\beta}_{i,s}\sim {\sf uniform}([0,1]^{2^m})$, the output at each node $i$ becomes
  \begin{align}\label{eqapp}
    \widehat{\mathbf{y}_{i,s}}=\sum_{k=1}^n\mathcal{P}_\ast (\bm{\beta}_{k,s})/n +r_{i,s}(T)=\mathbf{y}_s +r_{i,s}(T)
     \end{align}
where $r_{i,s}(T)$ is a computation residual  incurred at node $i$. Now it is of interest to develop an extended algorithm for solving the Boolean equation with only a finite number of iterations for the projection consensus step.

\begin{remark}
It is also possible to construct the exact projection $\mathbf{y}_s=\sum_{i=1}^n\mathcal{P}_\ast (\bm{\beta}_{i,s})/n$ from a series of node states along the {\sf DistributedLAE} algorithm, e.g., \cite{yangtao}, utilizing the idea of finite-time consensus \cite{sundaram2007,yuan2013}. This method would however rely on additional observability conditions for the network structure $\mathrm{G}$, and the structure $\mathrm{G}$ should also be known to all nodes.
\end{remark}

Based on Lemma \ref{prop2} and noting the exponential rate of convergence for algorithm \eqref{eq2}, there holds
\begin{align}\label{eqrr}
\|r_{i,s}(T) \|\leq C_0 e^{-\gamma_0 T}
\end{align}
for some constants $C_0>0$ and $\gamma_0>0$, where $\|\cdot\|$ represents the $\ell_2$ norm. Now, the new challenge lies in how we should construct the affine space $\mathcal{E}^{\rm b}$ of the solutions to the linear equation \eqref{induced} from the $ \widehat{\mathbf{y}_{i,s}},s=1,\dots,2^{m}+1$. There are two important properties that  $\mathcal{E}^{\rm b}$ should satisfy:
\begin{itemize}
\item[(i)] The distance  between $ \widehat{\mathbf{y}_{i,s}}$ and $\mathcal{E}^{\rm b}$ is upper bounded by $C_0 e^{-\gamma_0 T}$ which decays exponentially as $T$ increases;
\item[(ii)] $\mathcal{E}^{\rm b}\mcap \Delta_{2^m}\neq \emptyset$.
\end{itemize}
If only these two properties are utilized, we end up with a trivial solution of $\mathcal{E}^{\rm b}$ from the
 $ \widehat{\mathbf{y}_{i,s}},s=1,\dots,2^{m}+1$ as the $\mathbb{R}^{2^m}$. Therefore, in order to compute the actual solution space $\mathcal{E}^{\rm b}$, we propose to find an affine subspace with the minimal rank  among all the affine spaces satisfying the two properties.  Let $\mathfrak{A}$ denote the set of all affine spaces of $\mathbb{R}^{2^m}$.  Fixing $\epsilon>0$, we  define the following optimization problem:
\begin{equation}\label{opt}
\begin{aligned}
\min_{\mathcal{A}\in \mathfrak{A}} \quad & {\rm dim}(\mathcal{A})\\
\textrm{s.t.} \quad & \sum_{s=1}^{2^{m}+1}{\rm dist}(\widehat{\mathbf{y}_{i,s}},\mathcal{A})\leq \epsilon. \\
\end{aligned}
\end{equation}

The optimization problem (\ref{opt}) is obviously   feasible for all $\epsilon>0$. We present the following lemma on whether solving  (\ref{opt}) can potentially give us the true  $\mathcal{E}^{\rm b}$.
\begin{lemma}\label{lem5}
 Suppose the algorithm {\sf DistributedLAE}   runs at each node for $T$ steps, and produces $\widehat{\mathbf{y}_{i,s}}$ at each node $i$ for $\mathbf{x}_i(0)=\bm{\beta}_{i,s}\sim {\sf uniform}([0,1]^{2^m})$, $s=1,\dots,2^m+1$. Let $\epsilon:=\epsilon_T=C_\ast e^{-\gamma_\ast T}(2^m+1)$ for some $C_\ast\geq C_0$ and some $0<\gamma_\ast \leq \gamma_0$. Then
$$
\lim_{T\to \infty} \mathbb{P}\big(\mathcal{E}^{\rm b}\ \mbox{is a solution to (\ref{opt})} \big)=1
$$
under the probability measure introduced by the random initial values.
\end{lemma}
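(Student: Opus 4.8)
The plan is to show that, with probability tending to one, the true solution space $\mathcal{E}^{\rm b}$ is both \emph{feasible} for (\ref{opt}) and of \emph{minimal} dimension among all feasible affine spaces, so that it is an optimal solution. Write $d^\ast := {\rm dim}(\mathcal{E}^{\rm b})$ and recall that each exact solution $\mathbf{y}_s = \sum_{k=1}^n \mathcal{P}_\ast(\bm{\beta}_{k,s})/n$ lies in $\mathcal{E}^{\rm b}$, while $\widehat{\mathbf{y}_{i,s}} = \mathbf{y}_s + r_{i,s}(T)$ with $\|r_{i,s}(T)\| \le C_0 e^{-\gamma_0 T}$ by (\ref{eqrr}). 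Feasibility of $\mathcal{E}^{\rm b}$ is immediate and deterministic: since $\mathbf{y}_s \in \mathcal{E}^{\rm b}$ we have ${\rm dist}(\widehat{\mathbf{y}_{i,s}}, \mathcal{E}^{\rm b}) \le \|r_{i,s}(T)\| \le C_0 e^{-\gamma_0 T}$, and summing over the $2^m+1$ indices and using $C_\ast \ge C_0$, $\gamma_\ast \le \gamma_0$ gives $\sum_s {\rm dist}(\widehat{\mathbf{y}_{i,s}}, \mathcal{E}^{\rm b}) \le (2^m+1) C_0 e^{-\gamma_0 T} \le \epsilon_T$. Hence the optimal value of (\ref{opt}) never exceeds $d^\ast$, and it remains only to rule out feasible affine spaces of dimension strictly below $d^\ast$.

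The key probabilistic input is that the exact solutions affinely span $\mathcal{E}^{\rm b}$. As in the genericity argument behind Theorem \ref{thm1}, the law of each $\mathbf{y}_s$ is absolutely continuous on $\mathcal{E}^{\rm b}$ (it is an average of independent affine projections of uniform vectors), so any fixed affine subspace of $\mathcal{E}^{\rm b}$ of dimension $< d^\ast$ is hit with probability zero; with $k_\ast = 2^m+1 \ge d^\ast + 2$ points this yields, on an event $\Omega_0$ of probability one, that ${\sf Aff}(\mathbf{y}_1, \ldots, \mathbf{y}_{2^m+1}) = \mathcal{E}^{\rm b}$. On $\Omega_0$ I fix $d^\ast+1$ affinely independent exact solutions $\mathbf{y}_{s_0}, \ldots, \mathbf{y}_{s_{d^\ast}}$, and let $\kappa(\omega) > 0$ denote the $d^\ast$-dimensional volume of the simplex they span; this is the quantitative measure of affine independence to be exploited.

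To convert the constraint of (\ref{opt}) into a dimension bound I would establish a geometric lemma: if $p_0, \ldots, p_{d^\ast}$ each lie within distance $\delta$ of some affine space $\mathcal{A}$ with ${\rm dim}(\mathcal{A}) \le d^\ast - 1$, then the $d^\ast$-volume of their simplex is at most $C(d^\ast)\,\delta\,D^{d^\ast-1}$, where $D$ bounds their pairwise distances. This follows by writing each edge vector $p_j - p_0 = a_j + b_j$ with $a_j$ in the direction space of $\mathcal{A}$ and $\|b_j\| \le 2\delta$, and expanding the wedge product $\bigwedge_j (a_j + b_j)$: every term using only the $a_j$ vanishes because they lie in a space of dimension $\le d^\ast - 1$, so each surviving term carries a factor $\le 2\delta$. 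Now suppose, on $\Omega_0$, that some $\mathcal{A}$ with ${\rm dim}(\mathcal{A}) < d^\ast$ were feasible at time $T$; then the approximate points $\widehat{\mathbf{y}_{i,s_0}}, \ldots, \widehat{\mathbf{y}_{i,s_{d^\ast}}}$ are each within $\epsilon_T$ of $\mathcal{A}$, so by the lemma their simplex has $d^\ast$-volume at most $C(d^\ast)\,\epsilon_T\,D_0^{d^\ast-1}$ (with $D_0$ a finite bound on their diameter), which tends to $0$ as $T \to \infty$. But this simplex differs from the exact one by perturbations of size $\le C_0 e^{-\gamma_0 T}$, so by continuity of the volume its value exceeds $\kappa(\omega)/2 > 0$ for all $T$ large. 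These two facts are contradictory once $T \ge T_0(\omega)$, so beyond $T_0(\omega)$ no affine space of dimension below $d^\ast$ is feasible and $\mathcal{E}^{\rm b}$ is an optimal solution of (\ref{opt}).

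Finally I would pass from this pathwise statement to the probabilistic limit. Letting $A_T$ be the event that $\mathcal{E}^{\rm b}$ solves (\ref{opt}) at time $T$, the previous paragraph shows $\Omega_0 \subseteq \liminf_{T} A_T$, whence $\mathbb{P}(\liminf_T A_T) \ge \mathbb{P}(\Omega_0) = 1$; Fatou's lemma applied to the indicators then gives $\liminf_T \mathbb{P}(A_T) \ge \mathbb{P}(\liminf_T A_T) = 1$, and since $\mathbb{P}(A_T) \le 1$ the limit equals $1$, as claimed. I expect the main obstacle to be the geometric lemma together with its uniform use: the constraint in (\ref{opt}) quantifies over \emph{all} affine spaces of dimension $< d^\ast$, so the volume bound must be uniform in $\mathcal{A}$ (which it is, depending only on $\delta$ and the diameter), and the argument must balance the shrinking tolerance $\epsilon_T$ against the realization-dependent, almost-surely positive independence margin $\kappa(\omega)$ — this is precisely where the exponential decay (\ref{eqrr}) and the almost-sure affine spanning from Theorem \ref{thm1} are both needed.
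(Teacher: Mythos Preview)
Your proposal is correct and shares the paper's overall structure: both first verify deterministically that $\mathcal{E}^{\rm b}$ is feasible for (\ref{opt}) via (\ref{eqrr}), then argue that no affine space of strictly smaller dimension can be feasible for large $T$, using the almost-sure affine spanning of $\mathcal{E}^{\rm b}$ by the exact $\mathbf{y}_s$ established in the proof of Theorem~\ref{thm1}. Where you diverge is in the execution of the second step. The paper argues by contradiction: it posits a lower-rank optimum $\mathcal{A}_\star$, uses the triangle inequality to transfer the $\epsilon$-closeness from the approximate points $\widehat{\mathbf{y}_{i,s}}$ to the exact points $\mathbf{y}_s$ (obtaining $\sum_s {\rm dist}(\mathbf{y}_s,\mathcal{A}_\star)\le 2\epsilon$), and then asserts that this event is incompatible with the full-rank identity (\ref{99}) except on a set of vanishing probability. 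You instead work pathwise on the probability-one event $\Omega_0$ and introduce a quantitative witness of affine independence --- the $d^\ast$-volume $\kappa(\omega)$ of a simplex of $d^\ast+1$ selected exact solutions --- together with a geometric lemma bounding that volume by $C(d^\ast)\,\delta\,D^{d^\ast-1}$ whenever all vertices lie within $\delta$ of any $(d^\ast-1)$-dimensional affine space; the contradiction is then explicit and uniform over $\mathcal{A}$, and Fatou's lemma converts the pathwise conclusion into the limit statement. Your route is somewhat longer but buys a fully rigorous treatment of the step the paper leaves implicit (namely, why nearness of the full-rank family $\{\mathbf{y}_s\}$ to an arbitrary lower-dimensional flat forces that flat to fail as $\epsilon\to 0$), and it makes transparent why the bound must be uniform over all candidate $\mathcal{A}$.
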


Based on Lemma \ref{lem5}, with $\epsilon:=\epsilon_T=C_\ast e^{-\gamma_\ast T}(2^m+1)$,  the optimization problem (\ref{opt}) achieves its optimum at  a family of affine spaces $\mathfrak{A}_{i,\epsilon}^\star$.
  Lemma \ref{lem5} also suggests that with high probability, $\mathcal{E}^{\rm b}\in \mathfrak{A}_{i,\epsilon}^\star$ for small $\epsilon$ (i.e., large $T$). It remains unclear how we can localize the exact $\mathcal{E}^{\rm b}$ from $\mathfrak{A}_{i,\epsilon}^\star$, for which we propose to solve the following optimization problem:
  \begin{equation}\label{opt2}
\begin{aligned}
\max_{\mathcal{A}\in  \mathfrak{A}_{i,\epsilon}^\star} \quad & {\rm Card}(\mathcal{A}\mcap \Delta_{2^m}).
\end{aligned}
\end{equation}
Here  ${\rm Card}(\cdot)$ represents the cardinality of a finite set.
\begin{lemma}\label{lem6} Suppose the algorithm {\sf DistributedLAE}   runs at each node for $T$ steps, and produces $\widehat{\mathbf{y}_{i,s}}$ at each node $i$ for $\mathbf{x}_i(0)=\bm{\beta}_{i,s}\sim {\sf uniform}([0,1]^{2^m})$, $s=1,\dots,2^m+1$. Let $\epsilon:=\epsilon_T=C_\ast e^{-\gamma_\ast T}(2^m+1)$ for some $C_\ast\geq C_0$ and some $0<\gamma_\ast \leq \gamma_0$.
There holds
$$
\lim_{T\to \infty} \mathbb{P}\big(\mathcal{E}^{\rm b}\in   \mathfrak{A}_{i,\epsilon}^\star\big)=1
$$
under the probability measure introduced by the random initial values.
\end{lemma}

Let $C_*\ge C_0,~\gamma_*\le\gamma_0$ be fixed. Let $T$ be given.
We present the following algorithm which relies only on a finite number of steps for the {\sf DistributedLAE} subroutine.
\begin{algorithm}[H]
  \caption{{\sf ExtendedDistributedBooleanEquationSolver}}
  \label{algo:ag2}
  \begin{algorithmic}[1]
    \REQUIRE{Over the network $\mathrm{G}$,  node $i$ holds Boolean equation $f_i(x)=\sigma_i$; nodes communicate with only neighbors on $\mathrm{G}$ about their dynamical states.}
    \ENSURE{Each node $i$ computes $\{x\in\{0,1\}^m: f_j(x)=\sigma_j, \ j=1,\dots,n\}$ with high probability. }
    \STATE Each node $i$ locally  computes $\mathbf{M}_{f_i}={\sf BooleanMatricization}(f_i)\in \mathbb{R}^2\times\mathbb{R}^{2^m}$  for all $i\in \mathrm{V}$;

     \STATE Each node $i$ assigns  $\mathscr{E}_i^{\rm b}: \mathbf{H}_i \mathbf{y}=\mathbf{z}_i$ by $\mathbf{H}_i \leftarrow \mathbf{M}_{f_i}$ and $\mathbf{z}_i\leftarrow\Theta_1(\sigma_i)$;
    \STATE For $s=1,\dots, 2^m+1$, each node $i$ randomly and independently  selects $\mathbf{x}_i(0)=\bm{\beta}_{i,s}\sim {\sf uniform}([0,1]^{2^m})$, and runs {\sf DistributedLAE} for $T$ steps to produce
     $\widehat{\mathbf{y}_{i,s}}=\sum_{k=1}^n\mathcal{P}_\ast (\bm{\beta}_{k,s})/n +r_{i,s}(T)$ at each node $i\in\mathrm{V}$;
    \STATE Each node $i$ assigns $\epsilon=C_\ast e^{-\gamma_\ast T}(2^m+1)$ to produce $\mathfrak{A}_{i,\epsilon}^\star$ by solving (\ref{opt}) locally;
    \STATE Each node $i$ solves (\ref{opt2})  to generate $S_i={\rm arg}\max_{\mathcal{A}\in  \mathfrak{A}_{i,\epsilon}^\star}   {\rm Card}(\mathcal{A}\mcap \Delta_{2^m}) $ locally;

      \RETURN $\mathcal{S}_i= \Upsilon_m (S_i)$ at node $i$ for all $i\in\mathrm{V}$.
  \end{algorithmic}
\end{algorithm}

\begin{theorem}\label{thm2} Suppose the system of  Boolean equations (\ref{bes}) admits at least one exact solutions.
The Algorithm \ref{algo:ag2} returns    the set of solutions to (\ref{bes}) at each node $i$ with high probability for large $T$. To be precise, there holds for all $i=1,\dots,n$ that
$$
\lim_{T\to \infty} \mathbb{P}\big(\mathcal{S}_i\ \mbox{is the set of solutions to (\ref{bes})} \big)=1.
$$
  \end{theorem}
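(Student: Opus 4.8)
The plan is to reduce Theorem~\ref{thm2} to the two preparatory lemmas, Lemma~\ref{lem5} and Lemma~\ref{lem6}, by tracking how the correctness of the output $\mathcal{S}_i=\Upsilon_m(S_i)$ depends on the intermediate objects produced by Algorithm~\ref{algo:ag2}. The key observation is that the final answer is exactly correct \emph{provided} the affine subspace $S_i$ returned by Line~5 coincides with the true solution space $\mathcal{E}^{\rm b}$ of the induced linear system \eqref{induced}. Indeed, if $S_i=\mathcal{E}^{\rm b}$, then by the correctness of {\sf BooleanVectorSearch} established in Lemma~\ref{le:correctnessofalgo1}, the set $S_i\mcap\Delta_{2^m}$ is precisely the collection of Boolean vectors in $\mathcal{E}^{\rm b}$, and since $\Theta_m$ and $\Upsilon_m$ are mutually inverse bijections between $\{0,1\}^m$ and $\Delta_{2^m}$ (as recorded after Lemma~\ref{prop1}), applying $\Upsilon_m$ recovers exactly the solution set $\{x\in\{0,1\}^m:f_j(x)=\sigma_j,\ j=1,\dots,n\}$. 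Thus the entire probabilistic content collapses to the single event $\{S_i=\mathcal{E}^{\rm b}\}$, and it suffices to prove that $\mathbb{P}(S_i=\mathcal{E}^{\rm b})\to 1$ as $T\to\infty$.

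First I would make explicit the deterministic link between $\mathcal{E}^{\rm b}$ being a true solution to the Boolean system and the feasibility of the two optimization problems. By hypothesis the Boolean system is satisfiable, so by the discussion following \eqref{induced} the linear system has a nonempty affine solution space $\mathcal{E}^{\rm b}=\mcap_{i=1}^n\mathcal{E}^{\rm b}_i$ with $\mathcal{E}^{\rm b}\mcap\Delta_{2^m}\neq\emptyset$. Next I would chain the two lemmas. Lemma~\ref{lem5} gives that with probability tending to $1$, $\mathcal{E}^{\rm b}$ is an optimal (minimal-dimension) solution to \eqref{opt}, hence $\mathcal{E}^{\rm b}\in\mathfrak{A}_{i,\epsilon}^\star$ on this high-probability event. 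Conditioned on this inclusion, Lemma~\ref{lem6} gives that with probability tending to $1$, $\mathcal{E}^{\rm b}$ is the \emph{unique} maximizer of \eqref{opt2} over $\mathfrak{A}_{i,\epsilon}^\star$, which is precisely the object $S_i$ that Line~5 returns. Formally I would intersect these two high-probability events: letting $A_T=\{\mathcal{E}^{\rm b}\in\mathfrak{A}_{i,\epsilon}^\star\}$ and $B_T=\{\mathcal{E}^{\rm b}\text{ is the unique solution to }\eqref{opt2}\}$, both $\mathbb{P}(A_T)\to1$ and $\mathbb{P}(B_T)\to1$, so $\mathbb{P}(A_T\cap B_T)\to1$, and on $A_T\cap B_T$ the output satisfies $S_i=\mathcal{E}^{\rm b}$.

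The care needed here is mostly bookkeeping: the event in Lemma~\ref{lem6} is stated as ``$\mathcal{E}^{\rm b}$ is the unique solution to \eqref{opt2}'', but \eqref{opt2} optimizes over $\mathfrak{A}_{i,\epsilon}^\star$, which is only well-defined (and only contains $\mathcal{E}^{\rm b}$) once $A_T$ holds; I would verify that the formulation of Lemma~\ref{lem6} already incorporates this dependence so that $B_T\subseteq A_T$ or, failing that, work directly with $A_T\cap B_T$ and argue the uniqueness statement already forces $S_i=\mathcal{E}^{\rm b}$. I would also note that the constants $C_\ast\geq C_0$ and $0<\gamma_\ast\leq\gamma_0$ and the schedule $\epsilon=\epsilon_T=C_\ast e^{-\gamma_\ast T}(2^m+1)$ used in Algorithm~\ref{algo:ag2} are exactly those assumed in both lemmas, so the lemmas apply verbatim with no adjustment.

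The main obstacle is not in this final assembly, which is essentially a union-bound over two almost-sure-in-the-limit events, but rather lies upstream in Lemma~\ref{lem5} and Lemma~\ref{lem6} themselves. In particular, the genuinely delicate point these lemmas must secure is that the random algebraic solutions $\widehat{\mathbf{y}_{i,s}}$, being i.i.d.\ draws filtered through the projection $\mathcal{P}_\ast$ onto $\mathcal{E}^{\rm b}$ with $k_\ast=2^m+1$ samples, are in general position within $\mathcal{E}^{\rm b}$ with probability one, so that no affine subspace of strictly smaller dimension than $\dim(\mathcal{E}^{\rm b})$ can $\epsilon$-fit all of them once $\epsilon$ is driven to $0$, while simultaneously the residuals $r_{i,s}(T)$ of size $O(e^{-\gamma_0 T})$ do not destroy this separation for the chosen tolerance $\epsilon_T$. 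Since Theorem~\ref{thm2} is permitted to invoke those lemmas as given, the proof of the theorem itself is short; I would keep it to the conditioning argument above and defer all the measure-theoretic and general-position subtleties to the proofs of Lemma~\ref{lem5} and Lemma~\ref{lem6}.
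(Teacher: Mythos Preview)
Your proposal is correct and takes essentially the same approach as the paper, which simply states that Theorem~\ref{thm2} ``is a direct result of Lemma~\ref{lem5} and Lemma~\ref{lem6}'' without further elaboration; you have supplied the explicit intersection-of-events argument that the paper leaves implicit. One small slip: in Algorithm~\ref{algo:ag2} the subroutine {\sf BooleanVectorSearch} is not invoked, so your appeal to Lemma~\ref{le:correctnessofalgo1} is misplaced---the correct justification that $\Upsilon_m(\mathcal{E}^{\rm b}\mcap\Delta_{2^m})$ equals the Boolean solution set is the bijection $\Theta_m\leftrightarrow\Upsilon_m$ together with Lemma~\ref{prop1}, exactly as in the last paragraph of the proof of Theorem~\ref{thm1}.
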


\begin{remark}
Algorithm \ref{algo:ag2} relies on knowledge at each node on an upper bound of $C_0$, and a lower bound of $\gamma_0$. Since the number of Boolean equations in the form of (\ref{bes}) is essentially finite\footnote{There can be an infinite number of formulas representing a single Boolean mapping $g$ from $\{0,1\}^m$ to $\{0,1\}$. However, the matrix representation  $\mathbf{M}_g$ in the sense of Lemma \ref{prop1} is unique which does not depend on a particular formula of $g$.}, such knowledge can be obtained from the network structure $\mathrm{G}$. Note that Algorithm \ref{algo:dbes} on the other hand does not rely on any information of $\mathrm{G}$.
\end{remark}
It is straightforward to see that Theorem \ref{thm2}  is a direct result of Lemma \ref{lem5} and Lemma \ref{lem6}. The  details of the proofs of Lemma \ref{lem5} and Lemma \ref{lem6} have been put in the appendices. Moreover,  it would be of interest to understand the convergence time of Algorithm \ref{algo:ag2}, which  can be measured as the 
$$
T_\epsilon:= \inf \Big\{T: \  \mathbb{P}\big(\mathcal{S}_i\ \mbox{is the set of solutions to (\ref{bes})} \big) \geq 1 - \epsilon \Big\}. 
$$
An estimate of $T_\epsilon$ as a function of $\epsilon$ relies on estimates of $\mathbb{P}\big(\mathcal{E}^{\rm b}\ \mbox{is the unique solution to (\ref{opt2})} \big)$  as a function of $T$. This is however not possible under the current analysis, as Lemma 5 is established by a contradiction argument.

\subsection{Prior Knowledge of Solution Set}
From Theorem \ref{thm1}, to find all solutions to the  Boolean equations (\ref{bes}), the projection consensus step for solving the induced linear algebraic equation needs to run $2^m+1$ rounds with randomly selected initial values. It turns out, if we know  certain structure of the Boolean equations (\ref{bes}), we can reduce the number of rounds of solving linear equations.

Let $\chi_0$ be the cardinality of the image of the Boolean mapping $[f_1\ \dots\ f_n]^\top$, i.e.,
\begin{align}
\chi_0={\rm Card}\big\{(f_1(x),\ \dots, f_n(x)): x\in \{0,1\}^m\big\}.
\end{align}
We present the following result.

\begin{theorem}\label{thm3} Suppose the system of  Boolean equations (\ref{bes}) admits at least one exact solutions.   Let $k_\ast=2^m-\chi_0+1$.
 Then with probability one,  the set $\mathcal{S}$ that the algorithm {\sf DistributedBooleanEquationSolver} returns is the solution set  to   (\ref{bes}).
\end{theorem}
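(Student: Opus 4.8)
The plan is to reuse the entire logical skeleton behind the proof of Theorem \ref{thm1}, changing only the bookkeeping of how many random algebraic solutions are required. The solvability hypothesis guarantees that $\mathcal{E}^{\rm b}=\bigcap_{i=1}^n\mathcal{E}^{\rm b}_i$ is a nonempty affine subspace, that each $\mathbf{y}_s=\sum_{k=1}^n\mathcal{P}_\ast(\bm{\beta}_{k,s})/n$ returned by {\sf DistributedLAE} lies in $\mathcal{E}^{\rm b}$, and that {\sf BooleanVectorSearch} outputs exactly ${\sf Aff}(\mathbf{y}_1,\dots,\mathbf{y}_{k_\ast})\cap\Delta_{2^m}$. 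Since every $\mathbf{y}_s\in\mathcal{E}^{\rm b}$ we always have ${\sf Aff}(\mathbf{y}_1,\dots,\mathbf{y}_{k_\ast})\subseteq\mathcal{E}^{\rm b}$, so the returned set is automatically contained in $\mathcal{E}^{\rm b}\cap\Delta_{2^m}$, which, via $\Upsilon_m(\Theta_m(x))=x$, is precisely the solution set of (\ref{bes}). Hence the whole theorem reduces to showing that, with probability one, the $k_\ast$ random points span \emph{all} of $\mathcal{E}^{\rm b}$, i.e. ${\sf Aff}(\mathbf{y}_1,\dots,\mathbf{y}_{k_\ast})=\mathcal{E}^{\rm b}$. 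Spanning a $D$-dimensional affine space needs $D+1$ affinely independent points, so the crux is the exact dimension count $\dim\mathcal{E}^{\rm b}=2^m-\chi_0$, which would make $k_\ast=2^m-\chi_0+1$ the minimal admissible number.

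This dimension identity is the heart of the matter. Writing $\mathbf{M}=[\mathbf{M}_{f_1};\dots;\mathbf{M}_{f_n}]$ for the stacked coefficient matrix, consistency gives $\dim\mathcal{E}^{\rm b}=2^m-\rank(\mathbf{M})$, so it suffices to show $\rank(\mathbf{M})=\chi_0$. I would analyze the columns of $\mathbf{M}$: by (\ref{eq1}) its $j$th column is $[\Theta_1(f_1(\itob{j}));\dots;\Theta_1(f_n(\itob{j}))]$, which is determined solely by the output tuple $(f_1(\itob{j}),\dots,f_n(\itob{j}))$. Thus $\mathbf{M}$ has exactly $\chi_0$ distinct columns, one for each element of the image of $[f_1\ \cdots\ f_n]^\top$, and $\rank(\mathbf{M})$ equals the dimension of the span of these $\chi_0$ distinct columns. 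The remaining task is to show these distinct columns are linearly independent, giving $\rank(\mathbf{M})=\chi_0$. This linear-independence step is the \emph{main obstacle}, and it is genuinely delicate rather than automatic: each column sums to one on every length-two block (i.e. $\mathbf{1}^\top\mathbf{M}_{f_i}=\mathbf{1}^\top$), so the columns all lie in the affine slab cut out by the $n$ block-sum-equal-one relations, which by itself caps the rank at $n+1$. Distinctness of the columns therefore does not imply independence; one must track precisely how the image tuples of $[f_1\ \cdots\ f_n]^\top$ are positioned relative to these constraints, and it is exactly here that the structure of the output set of the Boolean map must be exploited.

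Granting $\dim\mathcal{E}^{\rm b}=2^m-\chi_0$, the probabilistic step is routine and mirrors Theorem \ref{thm1}. Because $\mathcal{P}_\ast$ is the orthogonal projector onto the affine space $\mathcal{E}^{\rm b}$, the projection of the full-dimensional cube $[0,1]^{2^m}$ onto $\mathcal{E}^{\rm b}$ is full-dimensional in $\mathcal{E}^{\rm b}$; consequently each $\mathbf{y}_s$, being an average of $n$ independent projections of uniform points, has a law that is absolutely continuous with respect to Lebesgue measure on $\mathcal{E}^{\rm b}$. For independent draws from such a law on an affine space of dimension $D=2^m-\chi_0$, any $D+1$ of them are affinely independent with probability one, since affine dependence is a single polynomial (determinant) condition and hence a measure-zero event. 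With $k_\ast=2^m-\chi_0+1=D+1$ this yields ${\sf Aff}(\mathbf{y}_1,\dots,\mathbf{y}_{k_\ast})=\mathcal{E}^{\rm b}$ almost surely.

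On this probability-one event, {\sf BooleanVectorSearch} returns $\mathcal{E}^{\rm b}\cap\Delta_{2^m}$, and $\Upsilon_m$ carries this bijectively onto $\{x\in\{0,1\}^m:f_j(x)=\sigma_j,\ j=1,\dots,n\}$, so $\mathcal{S}$ is exactly the solution set. In summary, everything but the rank identity is inherited from Theorem \ref{thm1}, and the sole new and delicate point of Theorem \ref{thm3} is the exact evaluation $\rank(\mathbf{M})=\chi_0$, equivalently the linear independence of the $\chi_0$ distinct stacked-indicator columns of $\mathbf{M}$; that is where I expect the real work, and where the hypothesis on $[f_1\ \cdots\ f_n]^\top$ must be decisive.
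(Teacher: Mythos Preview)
Your reduction—inherit Theorem~\ref{thm1}'s machinery and replace the crude bound $\dim(\mathcal{E}^{\rm b})\le 2^m$ by a sharper one involving $\chi_0$—is exactly the paper's route. The difference is in how much is claimed about $\rank(\mathbf{M})$. The paper's proof only establishes the one-sided bound: it observes that the stacked matrix $\mathbf{M}_f$ has at most $\chi_0$ distinct columns, concludes $\rank(\mathbf{M}_f)\le\chi_0$ and hence $\dim(\mathcal{E}^{\rm b})\ge 2^m-\chi_0$, and then simply writes ``repeating the proof of Theorem~\ref{thm1}'' to assert that $k_\ast=2^m-\chi_0+1$ random projections span $\mathcal{E}^{\rm b}$ almost surely. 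It never argues the reverse inequality $\rank(\mathbf{M}_f)\ge\chi_0$, and never proves the exact identity $\dim(\mathcal{E}^{\rm b})=2^m-\chi_0$ that you correctly single out as the crux.

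Your caution is warranted, not excessive. With only $k_\ast-1=2^m-\chi_0$ difference vectors one can span at most a $(2^m-\chi_0)$-dimensional subspace, so ``repeating Theorem~\ref{thm1}'' actually requires $\dim(\mathcal{E}^{\rm b})\le 2^m-\chi_0$ as well—precisely the inequality the paper does not supply. Your own observation that every column of $\mathbf{M}$ lies in the $n$-dimensional affine slab cut out by the block-sum constraints (hence $\rank(\mathbf{M})\le n+1$) shows the identity $\rank(\mathbf{M})=\chi_0$ can genuinely fail whenever $\chi_0>n+1$. A concrete instance is $m=n=2$, $f_1=x_1$, $f_2=x_2$: then $\chi_0=4$ but $\rank(\mathbf{M})=3$, so $\dim(\mathcal{E}^{\rm b})=1$ while $k_\ast=1$; a single random projection onto a one-dimensional $\mathcal{E}^{\rm b}$ almost surely misses the unique Boolean-vector solution, and the algorithm returns $\emptyset$. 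In short, you have not overlooked an argument the paper provides: the obstacle you flagged is real, and the paper's appeal to Theorem~\ref{thm1} does not address it.
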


The detailed proof of  Theorem    \ref{thm3} is in the appendix. As an illustration of Theorem \ref{thm3}, we present the following example.

\noindent {\bf Example 2}.
Let ``$\rightarrow$" denote the logical implication operation with $x\rightarrow y=\neg x\vee y$.
Consider the following Boolean equations
	\begin{align}\label{ex:eq}
	\begin{cases}
		f_1(x_1,x_2,x_3)=(x_1 \vee x_2) \wedge \neg x_3 =1,\\
		f_2(x_1,x_2,x_3)=(x_1\rightarrow x_2) \vee x_3 = 0,\\
		f_3(x_1,x_2,x_3)=x_1 \wedge x_3 = 0
	\end{cases}
	\end{align}
	which has a unique solution $[100]\in\{0,1\}^3$. We can verify that $\chi_0=4$ for the given $f_i,i=1,\dots,3$.  Along the Algorithm \ref{algo:dbes}, each node $i$ computes the ${\bf M}_{f_i}$ from $f_i$ as
	\begin{equation}\label{m}
\begin{aligned}
	& {\bf M}_{f_1} = \begin{bmatrix}
		1&1&0&1&0&1&0&1\\
		0&0&1&0&1&0&1&0
	  \end{bmatrix},\\
	& {\bf M}_{f_2} = \begin{bmatrix}
	  0&0&0&0&1&0&0&0\\
	  1&1&1&1&0&1&1&1
	  \end{bmatrix},\\
	&  {\bf M}_{f_3} = \begin{bmatrix}
	  1&1&1&1&1&0&1&0\\
	  0&0&0&0&0&1&0&1
	  \end{bmatrix}.\\
	\end{aligned}
	\end{equation}
Then  node $i$ locally assigns $\mathscr{E}^{\rm b}_i: \mathbf{H}_i \mathbf{y}=\mathbf{z}_i$ by $\mathbf{H}_i \leftarrow \mathbf{M}_{f_i}$ and $\mathbf{z}_i\leftarrow\Theta_1(\sigma_i)$, where
	\begin{align*}
	{\bf z}_1=\Theta_1(1)=\begin{bmatrix}0\\1\end{bmatrix},
	{\bf z}_2=\Theta_1(0)=\begin{bmatrix}1\\0\end{bmatrix},
	{\bf z}_3=\Theta_1(0)=\begin{bmatrix}1\\0\end{bmatrix}.
	\end{align*}

Next, for $s=1,\dots,2^3-\chi_0+1=5$, each node $i$ randomly and independently selects ${x}_i(0)=\beta_{i,s}$ from $\mathsf{Uniform}([0,1]^8)$, and runs $\mathsf{DistributedLAE}$ to produce ${\bf y}_s$ as
	\begin{align*}
	[\mathbf{y}_1\ \dots \ \mathbf{y}_5] 
	=\begin{bmatrix}
-0.1558& 0.0871&-0.1208&-0.0962&-0.1209\\
 0.1417&-0.1609& 0.1522& 0.1201&-0.0082\\
-0.0003& 0.0835& 0.0835&-0.1127& 0.1244\\
 0.0141& 0.0738&-0.0314&-0.0239& 0.1292\\
 1.0000& 1.0000& 1.0000& 1.0000& 1.0000\\
-0.0813& 0.0717&-0.1067& 0.1856&-0.0769\\
 0.0003&-0.0835&-0.0835& 0.1127&-0.1244\\
 0.0813&-0.0717& 0.1067&-0.1856& 0.0769
 \end{bmatrix}.
\end{align*}
Each node then locally runs the algorithm $\mathsf{BooleanVectorSearch}$, which gives
	$S=\{\delta_8^5\}$. Eventually nodes return  $\mathcal{S}= \Upsilon_m (S)=\{[100]\}$. We can randomly select other  ${x}_i(0)=\beta_{i,s}$ for $s=1,\dots,5$, and $\mathcal{S}=\{[100]\}$ is always correctly computed. Therefore, we have provided a verification of Theorem \ref{thm3}.

\section{Distributed Satisfiability  Verification}\label{secsat}

The correctness of the Algorithm \ref{algo:dbes} and Algorithm \ref{algo:ag2} relies on the crucial fact that   the system of Boolean equations (\ref{bes}) admits at least one exact solutions. This, however, has no guarantee in the first place. Even with all the $f_i$, verification of this satisfiability is a classical {\sf SAT} problem. For the network that runs  Algorithm \ref{algo:dbes} and Algorithm \ref{algo:ag2}, we also need to develop a method that can verify the satisfiability of (\ref{bes}) in a distributed manner.
\subsection{A Distributed {\sf SAT}  Algorithm}
We present the following Algorithm 4 for distributedly verifying the satisfiability of (\ref{bes}),  along with  a correctness proof of the algorithm.
\begin{algorithm}[ht]
  \caption{{\sf DistributedBooleanSatisfiability}}
  \label{algo:solbability}
  \begin{algorithmic}[1]
    \REQUIRE{Over the network $\mathrm{G}$,  node $i$ holds Boolean equation $f_i(x)=\sigma_i$; nodes communicate with only neighbors on $\mathrm{G}$ about their dynamical states.}
    \ENSURE{Each node $i$ verifies definitively satisfiability of $\{x\in\{0,1\}^m: f_j(x)=\sigma_j, \ j=1,\dots,n\}$ for all $i$. }
    \STATE Each node $i$ locally  computes $\mathbf{M}_{f_i}={\sf BooleanMatricization}(f_i)\in \mathbb{R}^2\times\mathbb{R}^{2^m}$  for all $i\in \mathrm{V}$;

     \STATE Each node $i$ assigns  $\mathscr{E}_i^{\rm b}: \mathbf{H}_i \mathbf{y}=\mathbf{z}_i$ by $\mathbf{H}_i \leftarrow \mathbf{M}_{f_i}$ and $\mathbf{z}_i\leftarrow\Theta_1(\sigma_i)$;
    \STATE Each node $i$ randomly and independently  selects $\mathbf{x}_i(0)\sim {\sf uniform}([0,1]^{2^m})$, and runs the recursion (\ref{eq2}) to produce an output $\tilde{\mathbf{y}}_i$;
       \STATE The network runs the average consensus algorithm (\ref{consensus}) with $\mathbf{x}_i(0)=\tilde{\mathbf{y}}_i$ to produce an output $\tilde{\mathbf{y}}_{\rm ave}=\sum_{j=1}^n \tilde{\mathbf{y}}_j/n$ at each node $i$;

          \STATE   {\bf return} {\sf unsatisfiable} at node $i$ if $\tilde{\mathbf{y}}_{\rm ave}=\tilde{\mathbf{y}}_i$; and go to Step \ref{s5} otherwise;

          \STATE
          The network runs {\sf DistributedLAE} to produce
     $\mathbf{y}_s=\sum_{i=1}^n\mathcal{P}_\ast (\bm{\beta}_{i,s})/n$ at each node $i\in\mathrm{V}$ with $\mathbf{x}_i(0)=\bm{\beta}_{i,s}\sim {\sf uniform}([0,1]^{2^m})$, $s=1,\dots,2^m+1$, and then each node $i$ locally computes  $\mathcal{S}= \Upsilon_m\big({\sf BooleanVectorSearch}(\yb_1,\dots,\yb_{2^m+1})\big)$;   \label{s5}
          \RETURN {\sf unsatisfiable} if $\mathcal{S}=\emptyset$, and {\sf satisfiable} otherwise at each node $i$.
  \end{algorithmic}
\end{algorithm}

\begin{theorem} \label{thm4}
With probability one, the Algorithm \ref{algo:solbability} correctly  returns     the satisfiability    of (\ref{bes}) at all nodes.
\end{theorem}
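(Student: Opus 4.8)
\emph{Proof proposal.} The plan is to reduce the satisfiability of the Boolean system \eqref{bes} to two successive algebraic tests carried out by the algorithm, and to verify that each test is decided correctly at every node. The starting point is the structural identity $\mathbf{M}_{f_i}\delta_{2^m}^j=\Theta_1\big(f_i(\itob{j})\big)$ coming from \eqref{eq1}, which yields the key equivalence: \eqref{bes} is satisfiable if and only if the induced linear system \eqref{induced} possesses a Boolean-vector solution, i.e. $\mathcal{E}^{\rm b}\mcap\Delta_{2^m}\neq\emptyset$. Indeed, $x$ solves \eqref{bes} iff $\Theta_m(x)\in\mathcal{E}^{\rm b}\cap\Delta_{2^m}$, and conversely $\delta_{2^m}^j\in\mathcal{E}^{\rm b}\cap\Delta_{2^m}$ forces $f_i(\itob{j})=\sigma_i$ for all $i$, so that $\Upsilon_m(\delta_{2^m}^j)$ solves \eqref{bes}. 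Consequently, unsatisfiability can occur in exactly two ways: either (a) $\mathcal{E}^{\rm b}=\emptyset$ (the linear system is already inconsistent), or (b) $\mathcal{E}^{\rm b}\neq\emptyset$ but $\mathcal{E}^{\rm b}\cap\Delta_{2^m}=\emptyset$. The first branches of the algorithm (Steps~3--5) are designed to detect case (a), while Step~\ref{s5} resolves the remaining case.

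For the first test I would analyze the projection consensus recursion \eqref{eq2} separately according to whether $\mathcal{E}^{\rm b}$ is empty. When $\mathcal{E}^{\rm b}\neq\emptyset$, Lemma~\ref{prop2} applies verbatim: every $\mathbf{x}_i(t)$ converges to the common limit $\sum_k\mathcal{P}_\ast(\mathbf{x}_k(0))/n\in\mathcal{E}^{\rm b}$, so the outputs $\tilde{\mathbf{y}}_i$ coincide across nodes and the average consensus \eqref{consensus} returns $\tilde{\mathbf{y}}_{\rm ave}=\tilde{\mathbf{y}}_i$ at every node. When $\mathcal{E}^{\rm b}=\emptyset$, Lemma~\ref{prop2} no longer applies and I must argue \emph{non}-consensus directly: since $\mathbf{x}_i(t+1)=\mathcal{P}_i(\cdots)\in\mathcal{E}_i^{\rm b}$ for all $t\ge0$, any common limit $\tilde{\mathbf{y}}_i\equiv\tilde{\mathbf{y}}$ would satisfy $\tilde{\mathbf{y}}\in\mcap_{i=1}^n\mathcal{E}_i^{\rm b}=\mathcal{E}^{\rm b}=\emptyset$, a contradiction; hence the $\tilde{\mathbf{y}}_i$ cannot all be equal and $\tilde{\mathbf{y}}_{\rm ave}$ differs from at least one $\tilde{\mathbf{y}}_i$. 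Thus the consensus test separates $\mathcal{E}^{\rm b}=\emptyset$ from $\mathcal{E}^{\rm b}\neq\emptyset$, letting the network declare \textsf{unsatisfiable} in case (a) and proceed to Step~\ref{s5} otherwise.

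If the algorithm reaches Step~\ref{s5}, then $\mathcal{E}^{\rm b}\neq\emptyset$ and I can reuse the machinery behind Theorem~\ref{thm1}. The point is that the argument underlying Theorem~\ref{thm1}---that the $2^m+1$ random exact solutions $\yb_1,\dots,\yb_{2^m+1}$ produced by {\sf DistributedLAE} affinely span $\mathcal{E}^{\rm b}$ with probability one---uses only nonemptiness of $\mathcal{E}^{\rm b}$ together with the absolute continuity of ${\sf uniform}([0,1]^{2^m})$, and never the existence of a Boolean solution. Hence, with probability one, ${\sf Aff}(\yb_1,\dots,\yb_{2^m+1})=\mathcal{E}^{\rm b}$, and by Lemma~\ref{le:correctnessofalgo1} the call to {\sf BooleanVectorSearch} returns exactly $\mathcal{E}^{\rm b}\mcap\Delta_{2^m}$. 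Since these $\yb_s$ are identical at all nodes, $\mathcal{S}=\Upsilon_m\big(\mathcal{E}^{\rm b}\cap\Delta_{2^m}\big)$ is computed identically everywhere and equals precisely the solution set of \eqref{bes}; thus $\mathcal{S}=\emptyset$ holds iff \eqref{bes} is unsatisfiable, which distinguishes case (b) from satisfiability. Combining the two tests with the equivalence of the first paragraph yields the correct verdict at every node with probability one.

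The step I expect to be the main obstacle is the empty-intersection analysis of Step~3 together with the requirement that the verdict be \emph{uniform} across nodes. Lemma~\ref{prop2} says nothing about the limiting behavior of \eqref{eq2} when $\mathcal{E}^{\rm b}=\emptyset$, so I must first ensure the recursion yields a well-defined output---the composed map is affine, so one should locate the fixed point of $\tilde{\mathbf{y}}_i=\mathcal{P}_i(\sum_j W_{ij}\tilde{\mathbf{y}}_j)$ and control the spectral radius of its linear part---and then rule out the degenerate coincidence $\tilde{\mathbf{y}}_i=\tilde{\mathbf{y}}_{\rm ave}$ at \emph{every} node. The latter is an affine condition on the absolutely continuous initial data and therefore holds only on a measure-zero set; making this precise, so that with probability one all nodes simultaneously detect non-consensus and agree on the answer, is the delicate part and is where the ``with probability one'' of the statement is genuinely needed beyond its role in Step~\ref{s5}.
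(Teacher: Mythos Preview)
Your proposal is correct and follows essentially the same route as the paper. The paper likewise splits into the cases $\mathcal{E}^{\rm b}=\emptyset$ versus $\mathcal{E}^{\rm b}\neq\emptyset$, and in the latter case invokes the argument of Theorem~\ref{thm1} exactly as you describe; for the former it isolates precisely the obstacle you anticipate as a separate technical lemma (Lemma~\ref{lem7}), which shows that when \eqref{LAE} is infeasible the recursion \eqref{eq2} still converges at each node (via the spectral analysis of $\mathbf{P}\mathbf{W}$ you sketch) but to non-identical limits, and then disposes of the coincidence $\tilde{\mathbf{y}}_i=\tilde{\mathbf{y}}_{\rm ave}$ by the same measure-zero reasoning you propose.
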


There are a few points behind  the Algorithm \ref{algo:solbability} that are worth emphasizing. First of all, satisfiability of the Boolean equations (\ref{bes}) is not equivalent to the satisfiability of the induced algebraic equation $\mathscr{E}^{\rm b}$ in (\ref{induced}). In fact, $\mathscr{E}^{\rm b}$ may be satisfiable in $\mathbb{R}^{2^m}$, but not in $\Delta_{2^m}$ which corresponds to the solutions to the Boolean equations (\ref{bes}). Secondly,   Algorithm \ref{algo:dbes} cannot be directly applied when the Boolean equations (\ref{bes}) is unsatisfiable since the Step $3$ of
 Algorithm \ref{algo:dbes} depends  crucially on the fact that  $\mathscr{E}^{\rm b}$ is solvable.  Therefore, in Algorithm \ref{algo:solbability}, we embed a component where nodes can first distributedly verify the satisfiability of  the induced algebraic equation $\mathscr{E}^{\rm b}$ as a preliminary evaluation of satisfiability for (\ref{bes}), and then the subroutines of Algorithm \ref{algo:dbes} can be utilized to produce  a further and final  decision on the satisfiability for (\ref{bes}).

 In order to establish the desired theorem, we would require the following  technical lemma.
\begin{lemma}\label{lem7}
   Suppose the system of linear equations (\ref{LAE}) is not satisfiable. Then along the recursion (\ref{eq2}), there hold
   \begin{itemize}
   \item[(i)] Each $\mathbf{x}_i(t)$    converges to a finite value $\mathbf{y}_i^\ast$ as $t\to\infty$.
   \item[(ii)] There exist at least two nodes $j,k\in\mathrm{V}$ such that $\mathbf{y}_j^\ast\neq \mathbf{y}_k^\ast$.
   \end{itemize}
   \end{lemma}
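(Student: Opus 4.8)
The plan is to view the recursion (\ref{eq2}) as a single affine, nonexpansive iteration on the stacked state and to extract convergence from the peripheral spectrum of its linear part, the only delicate point being the absence of an asymptotic drift. Stack the node states into $\mathbf{x}(t)=(\mathbf{x}_1(t),\dots,\mathbf{x}_n(t))\in\mathbb{R}^{nd}$. Then (\ref{eq2}) reads $\mathbf{x}(t+1)=F(\mathbf{x}(t))$ with $F=P\circ\mathcal{W}$, where $\mathcal{W}=W\otimes\mathbf{I}_d$ is the symmetric, doubly stochastic consensus averaging whose only modulus-one eigenvalue is the simple eigenvalue $1$ on the consensus subspace $\mathcal{D}=\{(\mathbf{v},\dots,\mathbf{v})\}$ (this is exactly the convergent averaging of the Remark after Lemma \ref{prop2}, valid for $0<\epsilon<1/n$ and $\mathrm{G}$ connected), and $P=\mathrm{diag}(\mathcal{P}_1,\dots,\mathcal{P}_n)$ is the block projector. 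Each $\mathcal{P}_i$ is well defined because the local equation $\mathbf{H}_i\mathbf{y}=\mathbf{z}_i$ is individually solvable, and it is affine with linear part $\Pi_i$, the orthogonal projector onto $\ker\mathbf{H}_i$. Hence $F(\mathbf{x})=A\mathbf{x}+\mathbf{b}$ is affine with $A=\Pi\,\mathcal{W}$, $\Pi=\mathrm{diag}(\Pi_1,\dots,\Pi_n)$. Since $P$ is firmly nonexpansive and $\mathcal{W}$ is averaged, $F$ is averaged and $\|A\|\le 1$.

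Part (ii) will follow immediately once (i) is established: if $\mathbf{x}(t)\to\mathbf{y}^\ast=(\mathbf{y}_1^\ast,\dots,\mathbf{y}_n^\ast)$, then by continuity $\mathbf{y}^\ast=F(\mathbf{y}^\ast)$, i.e. $\mathbf{y}_i^\ast=\mathcal{P}_i(\sum_j W_{ij}\mathbf{y}_j^\ast)$; were all $\mathbf{y}_i^\ast$ equal to a common $\mathbf{y}_0$, row-stochasticity of $W$ would give $\mathbf{y}_0=\mathcal{P}_i(\mathbf{y}_0)$ for every $i$, so $\mathbf{y}_0\in\bigcap_i\mathcal{E}_i$, contradicting the assumed infeasibility. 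So I concentrate on the convergence claim (i).

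For (i), I use that $A$ is a linear contraction that is the linear part of an averaged operator, so its peripheral spectrum consists only of the semisimple eigenvalue $1$; consequently $A^t\to E_1$, the spectral projection onto $\ker(\mathbf{I}-A)$ along $\range(\mathbf{I}-A)$, and
\[
\mathbf{x}(t)=A^t\mathbf{x}(0)+\sum_{k=0}^{t-1}A^k\mathbf{b}=E_1\mathbf{x}(0)+t\,E_1\mathbf{b}+(\text{convergent terms}).
\]
A short norm-equality argument ($\|\mathbf{x}\|=\|\Pi\mathcal{W}\mathbf{x}\|\le\|\mathcal{W}\mathbf{x}\|\le\|\mathbf{x}\|$ for $A\mathbf{x}=\mathbf{x}$) identifies $\ker(\mathbf{I}-A)=\{(\mathbf{v},\dots,\mathbf{v}):\mathbf{v}\in\ker\mathbf{H}\}$. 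Thus $\mathbf{x}(t)$ converges if and only if the drift $\mathbf{w}:=E_1\mathbf{b}$ vanishes, and this is precisely the remaining obstacle.

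To kill the drift I exploit the common free direction. Since $\ker\mathbf{H}\subseteq\ker\mathbf{H}_i$ for every $i$, the projector $\mathcal{P}_i$ leaves the $\ker\mathbf{H}$-component untouched, i.e. $\Pi_0\mathcal{P}_i=\Pi_0$ where $\Pi_0$ is the orthogonal projector onto $\ker\mathbf{H}$. Applying $\Pi_0$ to (\ref{eq2}) shows that $\mathbf{p}_i(t):=\Pi_0\mathbf{x}_i(t)$ obeys the pure averaging $\mathbf{p}_i(t+1)=\sum_j W_{ij}\mathbf{p}_j(t)$, hence the $\mathbf{p}_i(t)$ converge (to the common mean $\tfrac1n\sum_j\mathbf{p}_j(0)$) and in particular stay bounded. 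Because $\mathbf{w}=(\mathbf{v},\dots,\mathbf{v})$ with $\mathbf{v}\in\ker\mathbf{H}$ lies in this same direction, a nonzero drift would force $\Pi_0\mathbf{x}_i(t)\sim t\mathbf{v}$ to diverge, a contradiction; therefore $\mathbf{w}=0$. This yields convergence $\mathbf{x}_i(t)\to\mathbf{y}_i^\ast$ for all $i$, establishing (i), and then (ii) follows as above. I expect the drift-vanishing step to be the crux; the key device is projecting onto the common null space $\ker\mathbf{H}$, where the dynamics degenerate to plain average consensus. Alternatively, one could prove $\mathrm{Fix}(F)\neq\emptyset$ by showing some orbit is bounded and applying Brouwer's theorem to the compact convex hull of that orbit, but the $\ker\mathbf{H}$ projection gives the conclusion more directly.
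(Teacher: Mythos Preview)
Your proof is correct and follows the same overall skeleton as the paper's: write (\ref{eq2}) as an affine iteration $\mathbf{x}(t+1)=A\mathbf{x}(t)+\mathbf{b}$ with $A=\Pi\mathcal{W}$, show that $\|A\|\le 1$ with the only peripheral eigenvalue being a semisimple $1$, identify $\ker(\mathbf{I}-A)=\{\mathbf{1}_n\otimes\mathbf{v}:\mathbf{v}\in\ker\mathbf{H}\}$, establish that the drift $E_1\mathbf{b}$ vanishes so the iterates converge, and then derive (ii) by the obvious contradiction. The one genuine difference is in how the drift is killed. The paper does it algebraically: it picks an orthogonal $\mathbf{T}$ block-diagonalizing $A$ and checks directly that $\mathbf{T}_k^\top\mathbf{b}=\sum_i\mathbf{h}_k^\top\mathbf{H}_i^\dag\mathbf{z}_i=0$ using $\mathbf{h}_k\in\ker\mathbf{H}_i\Rightarrow\mathbf{h}_k^\top\mathbf{H}_i^\dag=0$. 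You do it dynamically: since $\ker\mathbf{H}\subseteq\ker\mathbf{H}_i$ the orthogonal projector $\Pi_0$ onto $\ker\mathbf{H}$ satisfies $\Pi_0\mathcal{P}_i=\Pi_0$, so the $\ker\mathbf{H}$-components run pure average consensus and stay bounded, which is incompatible with a nonzero drift in that very direction. Both arguments rest on the same orthogonality $\ker\mathbf{H}\perp\mathbf{H}_i^\dag\mathbf{z}_i$, but your formulation is a bit more robust (it does not require writing $\mathbf{b}$ explicitly) and sidesteps the paper's unstated point that $\ker(\mathbf{I}-A)=\ker(\mathbf{I}-A^\top)$, which is what makes the orthogonal block-diagonalization legitimate. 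Your appeal to averaged-operator theory for the peripheral spectrum is fine here (the composition of the firmly nonexpansive $P$ with $\mathcal{W}$, whose eigenvalues lie in $(-1,1]$ for $0<\epsilon<1/n$, is indeed averaged), though the same conclusion also drops out of your norm-equality line once you run it for a general unimodular eigenvalue.
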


The proof of Lemma  \ref{lem7} is put in the appendix, followed by the proof of Theorem \ref{thm4}.

 \subsection{Numerical Examples}
\noindent {\bf Example 3}. Consider the following Boolean equations
\begin{align}\label{ex:eq3}
	\begin{cases}
	f_1=x_1 \wedge x_2 \wedge x_3 =1,\\
	f_2=\neg x_1\vee (x_2\leftrightarrow x_3) = 1,\\
	f_3=x_1\wedge ( x_2\vee x_3) = 0.
	\end{cases}
	\end{align}
	which is unsatisfiable. The $f_i,i=1,\dots,3$ lead to    ${\bf M}_{f_i}$ as:
	\begin{align*}
	&{\bf M}_{f_1} = \begin{bmatrix}
	1&1&1&1&1&1&1&0\\
	0&0&0&0&0&0&0&1
	\end{bmatrix},\\
	&{\bf M}_{f_2} = \begin{bmatrix}
	0&0&0&0&0&1&1&0\\
	1&1&1&1&1&0&0&1
	\end{bmatrix},\\
	&{\bf M}_{f_3} = \begin{bmatrix}
	1&1&1&1&1&0&0&0\\
	0&0&0&0&0&1&1&1
	\end{bmatrix};
	\end{align*}
	and the $\mathbf{z}_i$ give
		\begin{align*}
	{\bf z}_1=\Theta_1(1)=\begin{bmatrix}0\\1\end{bmatrix},
	{\bf z}_2=\Theta_1(1)=\begin{bmatrix}0\\1\end{bmatrix},
	{\bf z}_3=\Theta_1(0)=\begin{bmatrix}1\\0\end{bmatrix}.
	\end{align*}
Then  node $i$ locally assigns $\mathscr{E}^{\rm b}_i: \mathbf{H}_i \mathbf{y}=\mathbf{z}_i$ by $\mathbf{H}_i \leftarrow \mathbf{M}_{f_i}$ and $\mathbf{z}_i\leftarrow\Theta_1(\sigma_i)$.

\begin{figure}[H]
\centering
\includegraphics[scale=0.2]{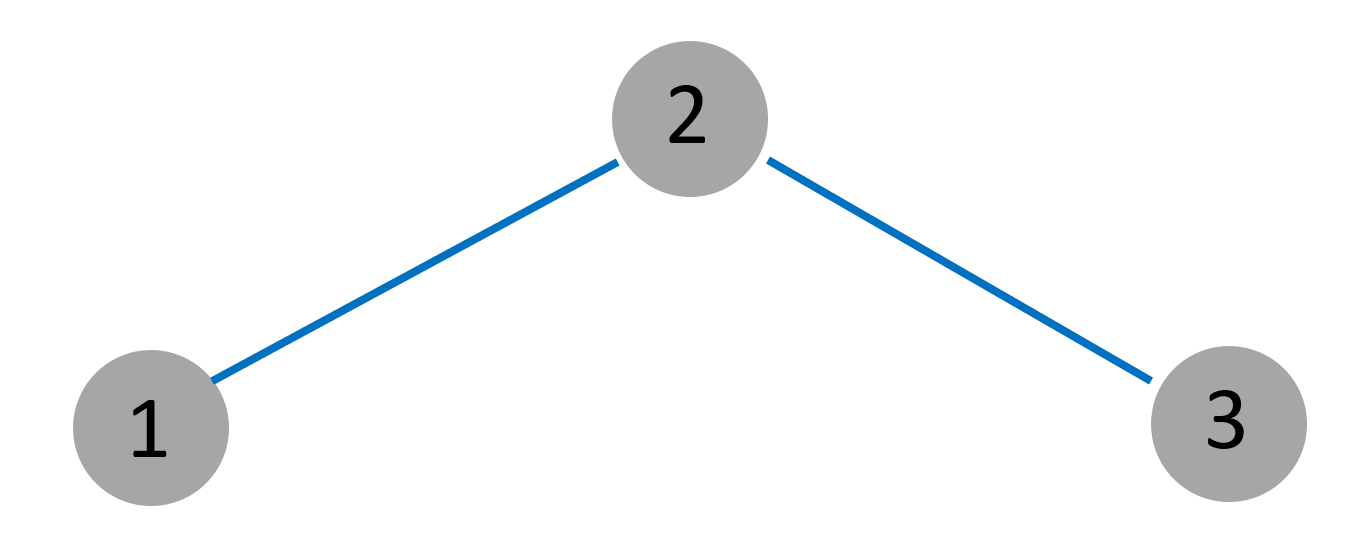}
\caption{The three-node path graph.}\label{fig2}
\end{figure}

Let the network $\mathrm{G}$ be a three-node path graph as shown in Figure \ref{fig2}. Each node $i$ randomly and independently  selects $\mathbf{x}_i(0)\sim {\sf uniform}([0,1]^{2^m})$, and runs the algorithm (\ref{eq2}) with $\epsilon=0.2$, where each $\mathbf{x}_i(t)$ is in $\mathbb{R}^8$. Denote $\mathbf{x}_i^\sharp(t):=(\mathbf{x}_i(t)_{[1]}\ \mathbf{x}_i(t)_{[2]})^\top$, where $\mathbf{x}_i(t)_{[1]}$ and $\mathbf{x}_i(t)_{[2]}$ are, respectively, the first and second entries of $\mathbf{x}_i(t)$. We run the algorithm (\ref{eq2}) for $50$ steps, and plot the trajectories of $\mathbf{x}_i^\sharp(t),t=0,1,\dots,50$ for $i=1,2,3$, respectively, in Figure \ref{figex3}.
	\begin{figure}[H]
		\centering
		\includegraphics[width=10cm]{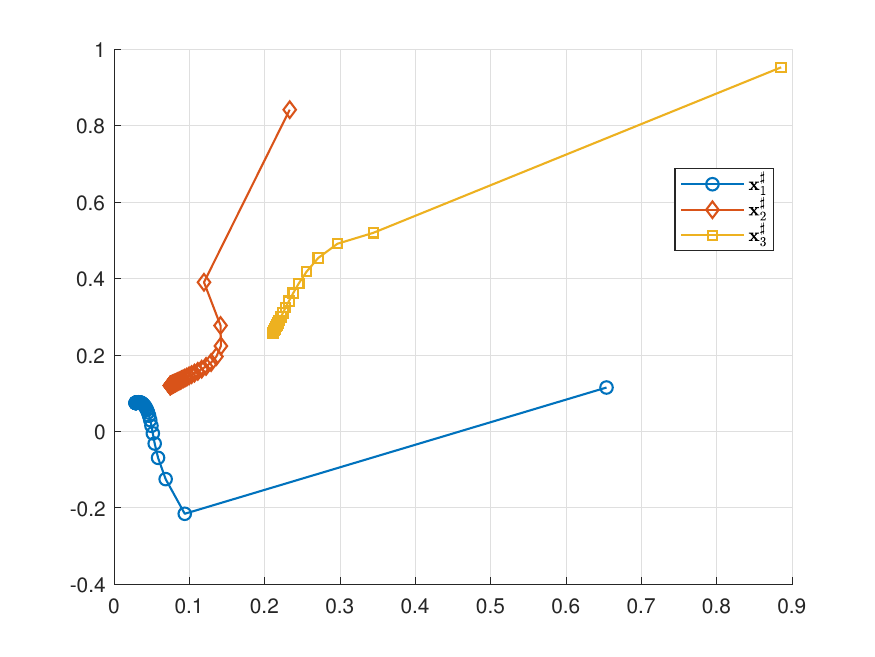}
		\caption{The trajectories of $\mathbf{x}_i^\sharp(t),t=0,1,\dots,50$ for $i=1,2,3$ with a randomly selected initial condition.} \label{figex3}
	\end{figure}
From the trajectories in Figure \ref{figex3}, it can be seen that $\mathbf{x}_i^\sharp(t)$ (and thus $\mathbf{x}_i(t)$) converge to different limits. As a result, Algorithm \ref{algo:solbability} returns {\sf unsatisfiable} in Step $5$.

\noindent {\bf Example 4}. Let us consider the following Boolean equations
	\begin{align} \label{ex4}
	\begin{cases}
		f_1(x_1,x_2,x_3)=(x_1 \vee x_2) \wedge \neg x_3 =0,\\
		f_2(x_1,x_2,x_3)=(x_1\rightarrow x_2) \vee x_3 = 0,\\
		f_3(x_1,x_2,x_3)=x_1 \wedge x_3 = 1
	\end{cases}
	\end{align}
	which is unsatisfiable. The $f_i$'s are the same as those used in Example 2, and therefore lead to the same $\mathbf{M}_{f_i}$'s in (\ref{m}).
Then  node $i$ locally assigns $\mathscr{E}^{\rm b}_i: \mathbf{H}_i \mathbf{y}=\mathbf{z}_i$ by $\mathbf{H}_i \leftarrow \mathbf{M}_{f_i}$ and $\mathbf{z}_i\leftarrow\Theta_1(\sigma_i)$, where
	\begin{align*}
	{\bf z}_1=\Theta_1(0)=\begin{bmatrix}1\\0\end{bmatrix},
	{\bf z}_2=\Theta_1(0)=\begin{bmatrix}1\\0\end{bmatrix},
	{\bf z}_3=\Theta_1(1)=\begin{bmatrix}0\\1\end{bmatrix}.
	\end{align*}
Although (\ref{ex4}) is unsatisfiable, we can verify that the linear equation $\mathscr{E}^{\rm b}$ from the three $\mathscr{E}^{\rm b}_i$'s is actually satisfiable since
\[
{\rm rank}\begin{bmatrix}
\mathbf{M}_{f_1}\\
\mathbf{M}_{f_2}\\
\mathbf{M}_{f_3}
\end{bmatrix}
={\rm rank}\begin{bmatrix}
\mathbf{M}_{f_1}&  {\bf z}_1\\
\mathbf{M}_{f_2} &  {\bf z}_2\\
\mathbf{M}_{f_3}&  {\bf z}_3
\end{bmatrix}.
\]
Thus, Algorithm \ref{algo:solbability} proceeds to Step $6-7$ after Step $5$, and eventually returns {\sf unsatisfiable} in Step $7$.

\section{Differentially Private Boolean Equation Solver}
In this section, we present an  algorithm for the Boolean equation system that admits differential privacy guarantee under Definition \ref{dp}. The Algorithms 2--4 have three stages:
\begin{itemize}
\item[(i)] (Algebraic transformation) Each Boolean equation is transformed into an algebraic form in a Euclidean space; the Boolean equation system in turn induces  a linear algebraic equation over the network.
\item[(ii)] (Distributed linear algebraic equation solving) All nodes exchange information about their dynamical states so that solutions to the network-level linear equation are obtained at individual nodes.
\item[(ii)] (Boolean vector search) Each node carries out exact or approximate Boolean vector search so that solutions to the Boolean equations are established.
\end{itemize}
Clearly, only at the stage (ii), node-to-node communications are necessary while the computations at stages (i) and (iii) are strictly local. Therefore, as long as we can have mechanisms in place that guarantee differential privacy for the distributed computations of linear algebraic equations, the   Algorithms 2--4 can then be made differentially private. For distributed consensus seeking and continuous optimization with differential privacy, there have been established lines of research with proven privacy guarantees under different conditions \cite{d1,d2,d3}.

\subsection{Algorithm with Differential Privacy}

Consider the linear equation (\ref{LAE}) over the network $\mathrm{G}$. Denote $\Omega:=[-1,1]^k$, where $k$ is the dimension of the decision variable in (\ref{LAE}). Since $\Omega$ is a compact convex set, we can introduce $\Pi_{\Omega}(\cdot)$ as the projector onto $\Omega$ in $\mathbb{R}^k$.  Let $\omega_i(t), i=1,\dots,n, t=0,1,2,\dots$ be i.i.d. Laplace noise with zero mean and variance $\sigma$. We first present the following
extended algorithm for solving the equation (\ref{LAE}):
\begin{align}\label{newlae}
\mathbf{x}_i(t+1)=\Pi_{\Omega}\bigg[\mathcal{P}_i\Big(\mathbf{x}_i(t)+\epsilon \sum_{j\in\mathrm{N}_i}\big(\mathbf{x}_j(t)- \mathbf{x}_i(t)\big)\Big)+\omega_i(t)\bigg], \ \ i=1,\dots,n.
\end{align}
Let  the algorithm (\ref{newlae}) run for $T$ steps starting from $\mathbf{x}(0):=(\mathbf{x}_1(0)\ \dots\ \mathbf{x}_n(0))^\top$, denote the output at each node $i$ as $\mathbf{x}_i(T)=\mathcal{D}_{i,T}(\mathbf{x}(0))$. We present the following algorithm for solving the Boolean equation (\ref{bes}).

\begin{algorithm}[H]
  \caption{{\sf DP-DistributedBooleanEquationSolver}}
  \label{algo:dp}
  \begin{algorithmic}[1]
    \STATE Each node $i$ locally  computes $\mathbf{M}_{f_i}={\sf BooleanMatricization}(f_i)\in \mathbb{R}^2\times\mathbb{R}^{2^m}$  for all $i\in \mathrm{V}$;

     \STATE Each node $i$ assigns linear equation  $\mathscr{E}_i^{\rm b}: \mathbf{H}_i \mathbf{y}=\mathbf{z}_i$ and then the projection onto its solution space $\mathcal{E}_i^{\rm b}$ by $\mathbf{H}_i \leftarrow \mathbf{M}_{f_i}$ and $\mathbf{z}_i\leftarrow\Theta_1(\sigma_i)$;
    \STATE For $s=1,\dots,K $, each node $i$ randomly and independently  selects $\mathbf{x}_i(0)=\bm{\beta}_{i,s}\sim {\sf uniform}([0,1]^{2^m})$, and the network runs (\ref{newlae}) for $T$ steps to produce
     $\mathbf{y}_{i,s}:=\mathcal{D}_{i,T}(\bm{\beta}_{s}) \in \mathbb{R}^{2^m}$ at each node $i$;
    \STATE Each node $i$ solves
     $\mathbf{O}_i= \argmin_{x\in \mathbb{R}^{2^m},\|x\|=1 } \sum_{s=1}^{K}\Big(\big(\mathbf{y}_{i,s}-\mathbf{y}_{i,1}\big)^\top x\Big)^2$, and selects uniformly at random $o_{1,i},\dots,o_{w,i}\in \mathbf{O}_i$;
    \STATE
    Each node $i$ computes  $l_\ast\in \Delta_{2^m}$ satisfying  $
  l_i^\ast\in \argmin_{l\in \Delta_{2^m}} \sum_{j=1}^{w}\Big( ({l-\mathbf{y}_{i,1}})^\top  o_{j,i}  \Big)^2    $;
     \RETURN $s^\ast_i= \Upsilon_m (  l_i^\ast)$ at each node $i$.
  \end{algorithmic}
\end{algorithm}

The essential idea behind Algorithm \ref{algo:dp} lies in twofolds: the exploration of the Laplace mechanism in the algebraic equation solving part to ensure differential privacy (steps 2 -- 3), and
search for one Boolean vector corresponding to the Boolean equation from points near the affine manifold of the algebraic equation (steps 4 -- 5).   The following result holds.

\begin{theorem}\label{thm-DP}
Suppose the system of  Boolean equations (\ref{bes}) admits at least one exact solution.  Let $\kappa = 6+(2+6\times2^{{(m/2)}})2^{m}$.   Then the Algorithm \ref{algo:dp} is $\epsilon$-differentially private under  Definition \ref{dp} if $\sigma \geq\kappa T K/ \epsilon$.
\end{theorem}

Theorem \ref{thm-DP} is established via a careful analysis of the relationship between the differential privacy of the Boolean equation and the resulting algebraic equation. The privacy preservation does not come for free. In general, the probability that the Algorithm \ref{algo:dp} returns a correct solution to (\ref{bes}), depends on the values $\sigma$ and $T$ and is thus denoted as  $\mathbb{P}_{\sigma,T}$. The privacy level increases, while  $\mathbb{P}_{\sigma,T}$ obviously decreases,   as $\sigma$ increases and $T$ decreases. This is a reflection of the privacy vs. accuracy tradeoff in differentially private computing and optimization schemes.  Letting $K=w=2^m+1$, similar to Theorem \ref{thm2}, there should hold
$
\lim_{\sigma\to 0, T\geq \infty}\mathbb{P}_{\sigma,T} =1.
$ One can have both privacy and accuracy guarantee if the steps for solving the induced linear algebraic equations can be made both accurate and private, see \cite{PPSC} for such a proposal on the assumption of a public-private communication network setup. From the proof of Theorem 5, it is also clear that if the subroutines for solving the induced algebraic equations can be made differentially private without losing their convergence accuracy, Algorithms 1 -- 4 can all become differentially private.

\subsection{Numerical Examples}

\noindent{\bf Example 5.} We consider the three-node network and the  Boolean equation investigated in Example 3 again. In Algorithm 5, we set  $K=9$, and approximately solve the $\mathbf{O}_i$ in step 4. Under different levels of noise variance $\sigma$, we run the algorithm for $100$ independent rounds, and record the times when the output $s_i^\ast$ is a true solution to the Boolean equation for node $i$, respectively, for $i=1,2,3$. We also record the number of times where all three $s_i^\ast$ are simultaneously  correct among the 100 rounds. The computation accuracy is then encoded in the rate of correctness.

In Figure 4, the performance of the Algorithm 5 with $T=500$ under different level of injected noises is demonstrated by the rates of correctness.  Clearly, when $\sigma$ tends to be small, the probability that the nodes returns a true solution approaches one. The result also shows that at a relatively high level of injected noise, the algorithm       continues to have some degree of correctness, demonstrating robustness of the algorithm.

\begin{figure}[ht]
		\centering
		\includegraphics[width=10cm]{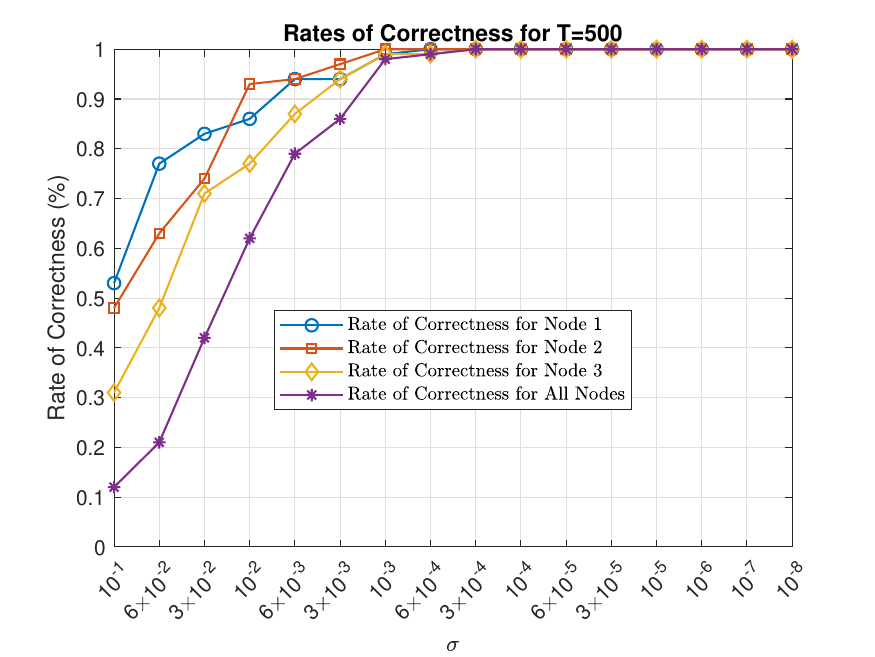}
		\caption{The accuracy level of the Algorithm 5 under different noise levels, measured by the rate of the algorithm returning correct answers among the 100 trials. } \label{figex3}
	\end{figure}

In Figure 5, for $\sigma=10^{-2}$ and $\sigma=10^{-3}$ under different number of steps $T$, the rates of correctness are also shown. It appears that the accuracy of the algorithm does not necessarily grow as $T$ increases with higher level of noises. 
\begin{figure}[H]
\centering
\begin{minipage}[t]{0.48\linewidth}
\centering
\includegraphics[width=3.2in]{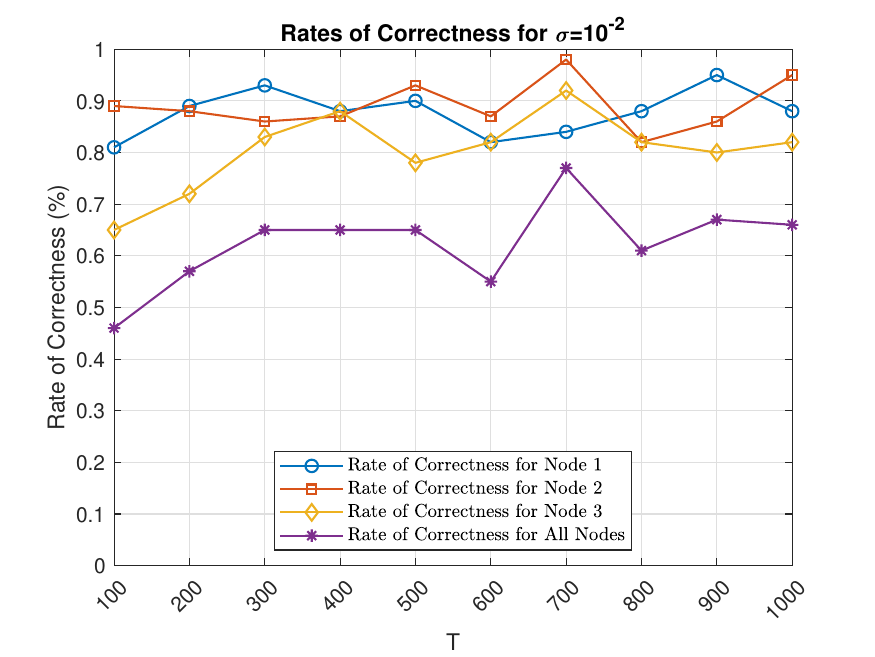}
\end{minipage}
\begin{minipage}[t]{0.48\linewidth}
\centering
\includegraphics[width=3.2in]{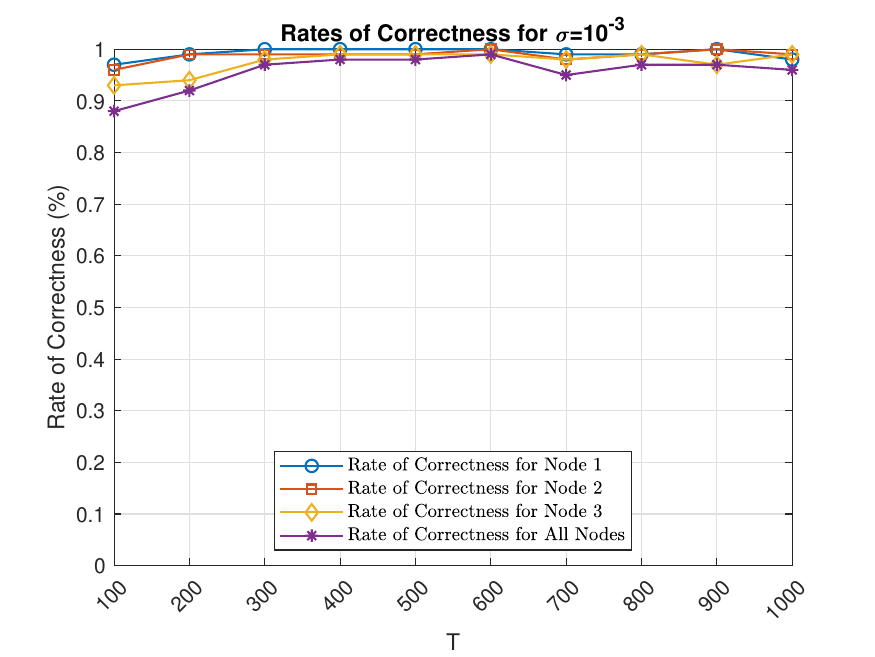}
\end{minipage}
\caption{The rates of returning a correct solution under different $T$ for $\sigma=10^{-2}$ and $\sigma=10^{-3}$, respectively.  }
\end{figure}

\section{Conclusions}\label{seccon}
We have proposed  distributed algorithms that  solve a system of Boolean equations over a network, where each node in the network possesses only  one Boolean equation from the system. The Boolean equation for a particular  node is a private equation known to this only, and the network nodes aim to compute the exact set of solutions to the system of Boolean equations without exchanging their local equations. Based on an algebraic representation of Boolean mappings and existing algorithms for network linear equations, we show that    distributed algorithms, where nodes do not directly reveal their equations or solution sets to neighbors over a graph,  can be constructed that  return the exact solution set of the Boolean equations at each node with high probability.  We also presented  an algorithm for distributed verification of the satisfiability of    Boolean equations. These algorithms hold the promise of becoming proven differentially private if suitable noise-injection mechanisms can be exploited. Therefore, the proposed algorithms put together may serve as  a comprehensive framework  for distributedly treating Boolean equations in terms of verifying satisfiability and then finding the exact solution set whenever possible. 

In the meantime, the work left open several questions. First of all, since the detour into the algebraic space significantly increased communication and computation  complexities, a thorough analysis would be of interest  on the  complexity of the proposed new class of algorithms  in terms of  number of field operations, and number of bits communicated among the nodes. Furthermore, when the system of Boolean equations are not solvable, it would be of interest to introduce proper notions of approximate solutions in the spirit of least squares solutions to systems of algebraic equations. The proposed algebraic embedding of the Boolean equations in Euclidean spaces also introduced natural  geometric metrics, which could shed lights on such extensions.  Finally, whether    the local and (potential) global privacy guarantees can be equivalently obtained for distributed algorithms in the discrete Boolean space, would be an interesting question.

\section*{Appendices}

\subsection*{A. Proof of Theorem \ref{thm1}}
First of all, let  $x_\ast$ be an exact solution to the   system of  Boolean equations (\ref{bes}). Then by Lemma~\ref{prop1}, there holds
$$
\mathbf{M}_{f_i}(\Theta_m(x_\ast))=\Theta_1(\sigma_i),\  \i=1,\dots,n.
$$
As a result, the system of linear equations $\mathcal{E}^{\rm b}_i: \mathbf{H}_i \mathbf{y}=\mathbf{z}_i$ with $\mathbf{H}_i = \mathbf{M}_{f_i}$ and $\mathbf{z}_i=\Theta_1(\sigma_i)$ contains at least one exact solution  $\Theta_m(x_\ast)$. This leads to $\mathcal{E}^{\rm b}=\mcap_{i=1}^n \mathcal{E}^{\rm b}_i\neq \emptyset$. Therefore, Lemma \ref{prop2} is applicable  to the {\sf DistributedLAE} algorithm, and each node $i$ indeed obtains the $\mathbf{y}_s=\sum_{i=1}^n\mathcal{P}_\ast (\bm{\beta}_{i,s})/n \in\mathcal{E}$ for $s=1,\dots,2^m+1$.

Next,  we prove that with probability one there holds ${\sf
  Aff}(\yb_1,\dots,\yb_{2^m+1})=\mathcal{E}^{\rm b}$. Since $\mathcal{E}^{\rm b}$ is an affine subspace from its definition, and $\mathbf{y}_s\in \mathcal{E}^{\rm b}$ for all $s=1,\dots,2^m+1$, we conclude that  ${\sf
  Aff}(\yb_1,\dots,\yb_{2^m+1})\subseteq\mathcal{E}$ is a sure event. We continue to show that
  ${\rm dim}({\sf
  Aff}(\yb_1,\dots,\yb_{2^m+1}))=\dim(\mathcal{E}^{\rm b})$ with probability one, where the randomness arises from the random initial values $\mathbf{x}_i(0)$ for Step 3 of the algorithm {\sf DistributedBooleanEquationSolver}.

  Note that, there holds from the basic properties of projections onto affine subspaces that
  $$
  \mathbf{y}_s=\sum_{i=1}^n\mathcal{P}_\ast (\bm{\beta}_{i,s})/n =\mathcal{P}_\ast\big(\sum_{i=1}^n \bm{\beta}_{i,s}/n\big)=  \sum_{i=1}^n  \mathbf{P}_\ast \bm{\beta}_{i,s}/n +\mathbf{a}_\ast.
$$
where $\mathbf{P}_\ast\in \mathbb{R}^{2^m \times 2^m}$ is a projection matrix, and $\mathbf{a}_\ast$ is a vector in $\mathbb{R}^{2^m}$. In fact, letting $\mathbf{e}\in\mathcal{E}$, $\mathbf{P}_\ast$ is the projector onto the linear subspace
$
\mathcal{E}^{\rm b}_\ast:=\big\{\mathbf{y}-\mathbf{e}: \ \mathbf{y}\in\mathcal{E}\big\}.
$  We thus have
\begin{align}\label{99}
(\yb_2 - \yb_1,
\ldots, \yb_{2^m+1} - \yb_1)=\mathbf{P}_\ast (\mathbf{l}_2-\mathbf{l}_1,\dots, \mathbf{l}_{2^m+1}-\mathbf{l}_1)
\end{align}
where $\mathbf{l}_j=\sum_{i=1}^n\bm{\beta}_{i,j}/n$ for $j=2,\dots,2^m+1$. Now, since the $\bm{\beta}_{i,s}\sim {\sf uniform}([0,1]^{2^m})$  are selected randomly and independently, it is trivial to see that ${\rm rank}(\mathbf{l}_2-\mathbf{l}_1,\dots, \mathbf{l}_{2^m+1}-\mathbf{l}_1)=2^m$ holds with probability one. This implies
\begin{align}
\rm{rank}(\yb_2 - \yb_1,
\ldots, \yb_{2^m+1} - \yb_1)= \rm{rank}(\mathcal{E}^{\rm b})
\end{align}
with probability one. Therefore,  ${\rm dim}({\sf
  Aff}(\yb_1,\dots,\yb_{2^m+1}))=\dim(\mathcal{E}^{\rm b})$ with probability one, leading to ${\sf
  Aff}(\yb_1,\dots,\yb_{2^m+1})=\mathcal{E}^{\rm b}$ in view of the fact ${\sf
  Aff}(\yb_1,\dots,\yb_{2^m+1})\subseteq\mathcal{E}$.

Finally, based on Lemma  \ref{le:correctnessofalgo1}, ${\sf BooleanVectorSearch}(\yb_1,\dots,\yb_{2^m+1})$ with probability one returns the set $S$ as $\mathcal{E}^{\rm b}\mcap \Delta_{2^m}$. This fact can be established from the following two aspects.
\begin{itemize}
\item[(i)] Let $x\in \Upsilon_m (\mathcal{E}^{\rm b}\mcap \Delta_{2^m})$. Then there holds $\mathbf{M}_{f_i}(\Theta_m(x))=\Theta_1(\sigma_i)$ for all $i=1,\dots,n$, or equivalently, $f_i(x)=\sigma_i$ for all $i=1,\dots,n$ based on Lemma \ref{prop1}.

\item[(ii)] Let $x\in\{0,1\}^m$ satisfying  $f_i(x)=\sigma_i$ for all $i=1,\dots,n$.  Then $\Theta_m(x)\in\Delta_{2^m}$ from the definiton of $\Theta_m(\cdot)$, and there must  also hold $\Theta_m(x)\in \mathcal{E}^{\rm b}$ again from Lemma~\ref{prop1}. As a result, we also have $x\in \Upsilon_m (\mathcal{E}^{\rm b}\mcap \Delta_{2^m})$.
\end{itemize}
We have now concluded that $\mathcal{S}=\Theta_m(S)$ is the solution set of  the system of Boolean equations (\ref{bes}) with probability one. The proof is complete.
\subsection*{B. Proof of Lemma \ref{lem5}}
In view of (\ref{eqapp}) and (\ref{eqrr}), there always holds
$$
\sum_{s=1}^{2^m+1}{\rm dist}(\widehat{\mathbf{y}_{i,s}},\mathcal{E}^{\rm b})\leq   (2^m+1) C_\ast e^{-\gamma_\ast T} =\epsilon.
$$
Therefore, the true solution space $\mathcal{E}^{\rm b}=\mcap_{k=1}^n \mathcal{E}^{\rm b}_k$ will always
be a feasible   point of (\ref{opt}).  Let us assume for the sake of building up a contradiction argument that  for any $\epsilon>0$, there exists\footnote{Technically, this $\mathcal{A}_\star$ is a random set and depends on the particular $\epsilon$. }  $\mathcal{A}_\star \in \mathfrak{A}$ as a solution to (\ref{opt}), and ${\rm rank}(\mathcal{A}_\star)< {\rm rank}(\mathcal{E}^{\rm b})$ with at least a probability $p>0$ (which does not depend on $T$ or $\epsilon$).

We have proved in the proof of Theorem \ref{thm1} that   there holds
\begin{align}\label{t2-0}
\rm{rank}(\yb_2 - \yb_1,
\ldots, \yb_{2^m+1} - \yb_1)= \rm{rank}(\mathcal{E}^{\rm b})
\end{align}
with probability one. Therefore, if indeed  $\mathcal{A}_\star \in \mathfrak{A}$ is  a solution to (\ref{opt}), there must hold
\begin{align}\label{t2-1}
\sum_{s=1}^{2^m+1}{\rm dist}({\mathbf{y}_s},\mathcal{A}_\star)&\leq    \sum_{s=1}^{2^m+1} \Big( {\rm dist}(\widehat{\mathbf{y}_{i,s}},\mathcal{A}_\star)+\|{\mathbf{y}_s}-\widehat{\mathbf{y}_{i,s}}\| \Big)\nonumber\\
&\leq \sum_{s=1}^{2^m+1}\Big( {\rm dist}(\widehat{\mathbf{y}_{i,s}},\mathcal{A}_\star)+ \|r_{i,s}(T)\| \Big)\nonumber\\
&\leq 2\epsilon
\end{align}
   where the first inequality is directly from the triangle inequality,   the second inequality is from
   (\ref{eqapp}), and the last inequality follows from (\ref{eqrr})  and (\ref{opt}).  Noting  the fact that $\bm{\beta}_{i,s}\sim {\sf uniform}([0,1]^{2^m})$  are selected randomly and independently, and the identity that for  $\mathbf{l}_j=\sum_{i=1}^n\bm{\beta}_{i,j}/n$
    $$
    (\yb_2 - \yb_1,
\ldots, \yb_{2^m+1} - \yb_1) =\mathbf{P}_\ast(\mathbf{l}_2-\mathbf{l}_1,\dots, \mathbf{l}_{2^m+1}-\mathbf{l}_1),
    $$
the probability that (\ref{t2-1}) and ${\rm rank}(\mathcal{A}_\star)< {\rm rank}(\mathcal{E}^{\rm b})$ simultaneously  holds goes to zero as $\epsilon $ tends to zero. We therefore have established a contradiction, and proved that $
\lim_{T\to \infty} \mathbb{P}\big(\mathcal{E}^{\rm b}\ \mbox{is a solution to (\ref{opt})} \big)=1
$.

\subsection*{C. Proof of Lemma \ref{lem6}}Suppose that $\mathcal{A}_\star \in  \mathfrak{A}_{i,\epsilon}^\star$ is a solution to (\ref{opt}). Then there hold
 \begin{itemize}
\item [(i)] ${\rm rank}(\mathcal{A}_\star)={\rm rank}(\mathcal{E}^{\rm b})$;

\item [(ii)] $\sum_{s=1}^{2^m+1}{\rm dist}({\mathbf{y}_s},\mathcal{A}_\star) \leq 2\epsilon$.
 \end{itemize}
We prove the desired lemma by establishing $\big(\mathcal{A}_\star \mcap \Delta_{2^m}\big) \subseteq (\mathcal{E}^{\rm b}\mcap \Delta_{2^m})$ with probability one when $\epsilon$ is sufficiently small.

Note that the ${\mathbf{y}_s}$ are distributed over the convex polyhedron   $\mathcal{P}_\ast\big([0,1]^{2^m}\big)\subseteq\mathcal{E}$ due to the fact that $\mathbf{y}_s=  \sum_{i=1}^n  \mathbf{P}_\ast \bm{\beta}_{i,s}/n +\mathbf{a}_\ast$. The condition (i) and (ii) imply that for sufficiently small $\epsilon$, there holds
$$
\mathsf{Aff}(\mathcal{P}_{\mathcal{A}_\star}(\mathbf{y}_1),\dots, \mathcal{P}_{\mathcal{A}_\star}(\mathbf{y}_{2^m+1}))=\mathcal{A}_\star
$$ with probability one since $\mathsf{Aff}(\mathbf{y}_1,\dots, \mathbf{y}_{2^m+1})=\mathcal{E}^{\rm b}$
with probability one from the proof of Theorem \ref{thm1}. Let $\bm{\delta}\in  \mathcal{A}_\star \mcap \Delta_{2^m}$. Then    there are $\lambda_k,k=1,\dots,2^m+1$ which are  upper bounded by some absolute constant $B>0$,  so that
\begin{align}
\bm{\delta}=\sum_{k=1}^{2^m+1} \lambda_k \mathcal{P}_{\mathcal{A}_\star}(\mathbf{y}_k).
\end{align}
This implies
\begin{align}\label{p2}
{\rm dist}(\bm{\delta}, \mathcal{E}^{\rm b})&\leq  B \sum_{k=1}^{2^m+1} {\rm dist}(\mathcal{P}_{\mathcal{A}_\star}(\mathbf{y}_k), \mathcal{E}^{\rm b})\nonumber\\
&\leq  B \sum_{k=1}^{2^m+1} {\rm dist}(\mathcal{P}_{\mathcal{A}_\star}(\mathbf{y}_k), \mathbf{y}_k)\nonumber\\
&=  B \sum_{k=1}^{2^m+1} {\rm dist}( \mathbf{y}_k,\mathcal{A}_\star) \nonumber\\
&\leq 2(2^m+1)B\epsilon.
\end{align}

Now,  (\ref{p2}) suggests that
$$
\big\|\mathbf{M}_{f_i}(\bm{\delta})-\Theta_1(\sigma_i)\big\|\leq 2(2^m+1)B\epsilon,\  i=1,\dots,n.
$$
The only possibility for this hold with small enough $\epsilon$ is
$$
f_i(\Upsilon_m(\bm{\delta}))=\sigma_i, i=1,\dots,n
$$
because $\mathbf{M}_{f_i}(\bm{\delta})= \Theta_1 f_i(\Upsilon_m(\bm{\delta}))$ by Lemma \ref{prop1}. As a result, $\bm{\delta}\in\mathcal{E}\mcap \Delta_{2^m}$, which implies  $\big(\mathcal{A}_\star \mcap \Delta_{2^m}\big) \subseteq (\mathcal{E}^{\rm b}\mcap \Delta_{2^m})$ with probability one.  Therefore, $\mathcal{E}^{\rm b}$ must be  the unique solution to (\ref{opt2})  if indeed $\mathcal{E}^{\rm b}\in   \mathfrak{A}_{i,\epsilon}^\star$.  By Lemma  \ref{lem5}, there holds  $$
\lim_{T\to \infty} \mathbb{P}\big(\mathcal{E}^{\rm b}\in   \mathfrak{A}_{i,\epsilon}^\star\big)=1
$$
and this concludes the proof of the desired lemma.

\subsection*{D. Proof of Theorem \ref{thm3}}

If $\chi_0$ is the cardinality of the image of the Boolean mapping $[f_1\ \dots\ f_n]^\top$, then the
  matrix $\mathbf{M}_f$
 has at most $\chi_0$ distinct columns. Since each column of  $\mathbf{M}_f$ is a vector in $\mathbb{R}^{2n}$ with the form of
 \[
 \begin{bmatrix} v_1\\
 \vdots \\
 v_n
 \end{bmatrix}
 \]
 where $v_k\in\{(1 \ 0)^\top, (0 \ 1)^\top\}$, we conclude that
  \[
 {\rm rank} (\mathbf{M}_f) \leq \chi_0.
 \]
 This implies  ${\rm dim}(\mathcal{E}^{\rm b})\geq d-\chi_0$. Repeating the proof of Theorem \ref{thm1} we know that
\begin{align}
\rm{rank}(\yb_2 - \yb_1,
\ldots, \yb_{k_\ast} - \yb_1)= \rm{dim}(\mathcal{E}^{\rm b})
\end{align}
with probability one with $k_\ast=d-\chi_0+1$. The desired theorem thus follows from the same argument as the proof of Theorem \ref{thm1}.
\subsection*{E. Proof of Lemma \ref{lem7}}
Denote $\mathbf{x}(t)=(\mathbf{x}_1^\top(t)\ \dots \ \mathbf{x}_n^\top(t))^\top$. Let $\mathbf{A}^\dag$ be the  M-P pseudoinverse of a matrix $\mathbf{A}$. Let $\rho(\mathbf{A})$ and $\sigma(\mathbf{A})$ represent the spectral radius and spectrum of a matrix $\mathbf{A}$, respectively.  Then from the basic representations  of the affine projections $\mathcal{P}_i$, the algorithm (\ref{eq2}) can be written in a compact form as
\begin{align}\label{r2}
\mathbf{x}(t+1)= \mathbf{P} \mathbf{W}\mathbf{x}(t)+ \mathbf{b}
\end{align}
where $\mathbf{P}={\rm diag}(\mathbf{P}_1\ \dots\ \mathbf{P}_n)$ is a block diagonal matrix with $\mathbf{P}_i=\mathbf{I}_d - \mathbf{H}_i^\dag \mathbf{H}_i$ being a projection matrix, $\mathbf{W}=W\otimes \mathbf{I}_d$ with $W\in\mathbb{R}^{n\times n}$ defined in (\ref{3}), and
\[
\mathbf{b}=\begin{bmatrix}
\mathbf{H}_1^\dag  \mathbf{z}_1\\
\mathbf{H}_2^\dag  \mathbf{z}_2\\
\vdots\\
\mathbf{H}_n^\dag  \mathbf{z}_n
\end{bmatrix}.
\]
Note that, the matrix $\mathbf{P}_i$ is the projector onto the linear subspace $\mathcal{L}_i:=\{\mathbf{y}:\mathbf{H}_i \mathbf{y}=0\}$. For any vector $\mathbf{u}=(\mathbf{u}_1^\top \ \dots \ \mathbf{u}_n^\top )^\top$ with $\mathbf{u}_i\in \mathbb{R}^d$, there holds
\begin{align}
\big\| \mathbf{P} \mathbf{W} \mathbf{u}\big\|^2&=\sum_{i=1}^n \big\|\mathbf{P}_i \sum_{j=1}^n W_{ij} \mathbf{u}_j\big\|^2 \nonumber\\
&\leq \sum_{i=1}^n\sum_{j=1}^n W_{ij} \|\mathbf{u}_j\|^2\nonumber\\
&=\sum_{j=1}^n   \|\mathbf{u}_j\|^2\nonumber\\
&= \|\mathbf{u}\|^2.
\end{align}
This implies  $\|\mathbf{P}\mathbf{W}\|_2\leq 1$, and consequently, we have $\rho(\mathbf{P} \mathbf{W})\leq 1$. Moreover, all eigenvalues of $\mathbf{P} \mathbf{W}$ on the unit circle of the complex plain must have equal algebraic and geometric multiplicities. We proceed to establish the following claims on the matrix $\mathbf{P}\mathbf{W}$.

\noindent{\it Claim A.} $|\lambda_i(\mathbf{P}\mathbf{W})|<1$ if for all $\lambda_i(\mathbf{P}\mathbf{W}) \neq 1\in \sigma(\mathbf{P}\mathbf{W})$.

\noindent{\it Claim B.} If $1\in \sigma(\mathbf{P}\mathbf{W})$, then the eigenspace corresponding to the eigenvalue $1$ is a subspace of $\mathcal{M}:\{\mathbf{1}_m\otimes \mathbf{y}: \mathbf{y}\in\mathbb{R}^d\}$.

In fact, let us consider the linear dynamical system
\begin{align}\label{r1}
\bar{\mathbf{x}}(t+1)=   \mathbf{P} \mathbf{W}\bar{\mathbf{x}}(t),
\end{align}
which defines a special projection consensus algorithm in the following form:
 \begin{align}
\bar{\mathbf{x}}_i(t+1)=  \bar{\mathcal{P}}_i \Big( \sum_{j=1}^n W_{ij} \bar{\mathbf{x}}_j(t)\Big), \ i=1,\dots,n.
\end{align}
Here each $\bar{\mathcal{P}}_i$ is the projection onto the linear subspace $\mathcal{L}_i$. As the $\mathcal{L}_i$'s are linear subspaces, there always holds $\mcap_{i=1}^n \mathcal {L}_i \neq \emptyset$. Therefore,  we can directly invoke  Lemma 3 of \cite{nedic2010constrainedTAC} to conclude that along (\ref{r1}), all $\bar{\mathbf{x}}_i(t)$ converges to a common static value for all initial conditions at time grows to infinity. Since (\ref{r1}) is a linear time-invariant system, the above two  claims must hold.

\noindent Proof of (i). We divide the proof into two cases.
\begin{itemize}
\item[(a).] Suppose $\rho(\mathbf{P} \mathbf{W})<1$. Then with $\mathbf{y}_\ast= (\mathbf{I}_{nd} -\mathbf{P} \mathbf{W})^{-1} \mathbf {b}$, (\ref{r2}) becomes
\begin{align}
 \mathbf{x}(t+1)- \mathbf{y}_\ast=   \mathbf{P} \mathbf{W}({\mathbf{x}}(t)-\mathbf{y}_\ast).
\end{align}
Obviously there holds $\lim_{t\to \infty}\mathbf{x}(t)=\mathbf{y}_\ast$.
\item[(b).] Suppose $\rho(\mathbf{P} \mathbf{W})=1$. From  Claim A, we know that the only eigenvalue of  $\mathbf{P} \mathbf{W}$ with magnitude one is $1$, which has equal algebraic and geometric multiplicity. Thus, we can find a real orthogonal matrix $\mathbf{T}$ such that
\begin{align}
\mathbf{T}^{-1} \mathbf{P} \mathbf{W} \mathbf{T}=\begin{bmatrix}
\mathbf{I}_c & 0 \\
0 & \bar{\mathbf{P}}_W
\end{bmatrix}
\end{align}
where $c$ is the multiplicity of the eigenvalue one, and all eigenvalues of $\bar{\mathbf{P}}_W$ are strictly within the unit circle. Letting $\mathbf{y}(t)=\mathbf{T}\mathbf{x}(t)$,(\ref{r2}) is written as
\begin{align} \label{r3}
 \mathbf{y}(t+1)=  \mathbf{T}^{-1} \mathbf{P} \mathbf{W} \mathbf{T}{\mathbf{y}}(t)+ \mathbf{T}^{-1} \mathbf{b}.
\end{align}
The first $c$ columns of the matrix $\mathbf{T}$,  $\mathbf{T}_1,\dots,\mathbf{T}_c$, are eigenvectors of the matrix $\mathbf{P} \mathbf{W}$ corresponding to eigenvalue one. We write
$$
\mathbf{T}=(\mathbf{T}_1\ \dots\ \mathbf{T}_c\ \mathbf{T}_\ast).
$$

Now, from Claim B, each $\mathbf{T}_k$ can be written into $\mathbf{1}_n\otimes \mathbf{h}_k$ with $\mathbf{h}_k\in \mathbb{R}^d$. This implies
$$
\mathbf{P}_i\mathbf{h}_k=(\mathbf{I}_d - \mathbf{H}_i^\dag \mathbf{H}_i)\mathbf{h}_k=\mathbf{h}_k
$$
 for all $i=1,\dots,n$ and all $k=1,\dots,c$. As a result, $\mathbf{H}_i^\dag \mathbf{H}_i \mathbf{h}_k=0$ for all $i=1,\dots,n$ and all $k=1,\dots,c$, which further implies
 \begin{align}
 \mathbf{T}_k^\top \mathbf{b}=  \mathbf{T}_k^\top \begin{bmatrix}
\mathbf{H}_1^\dag  \mathbf{z}_1\\
\mathbf{H}_2^\dag  \mathbf{z}_2\\
\vdots\\
\mathbf{H}_n^\dag  \mathbf{z}_n
\end{bmatrix}=\sum_{i=1}^n \mathbf{h}_k^\top \mathbf{H}_i^\dag  \mathbf{z}_i=0,\ \ k=1,\dots,c
 \end{align}
 since $\mathbf{H}_i^\dag \mathbf{H}_i \mathbf{h}_k=0$ implies $\mathbf{h}_k^\top \mathbf{H}_i$ utilizing the basic properties of   M-P pseudoinverse. The system (\ref{r3}) can therefore be further written as
\begin{equation}
 \begin{aligned}
 \mathbf{y}_a(t+1)&=  {\mathbf{y}}_a(t)\\
 \mathbf{y}_b(t+1)&=\bar{\mathbf{P}}_W \mathbf{y}_b(t)+\bar{b} \label{r4}
\end{aligned}
\end{equation}
where ${\mathbf{y}}_a(t)$ consists of the first $c$ entries of ${\mathbf{y}}(t)$, and ${\mathbf{y}}_b(t)$ has the remaining entries of  ${\mathbf{y}}(t)$. From (\ref{r4}), the case has been reduced to Case (a), and each $\mathbf{x}_i(t)$ must converge to a static value  because $\mathbf{y}(t)$ does.
\end{itemize}
This concludes the proof of statement (i).

\medskip

\noindent{Proof of (ii).} Suppose  at the limits of the $\mathbf{x}_i(t)$ there holds $\mathbf{y}_1^\ast=\dots= \mathbf{y}_n^\ast=\mathbf{u}^\ast$. This means $\mathbf{u}^\ast \in \mathcal{E}_i$ for all $i$, and thus $\mcap_{i=1}^n \mathcal{E}_i \neq \emptyset$. Consequently,  the system of linear equations (\ref{LAE}) admits at least one exact solution, which is a contradiction with our standing assumption of the lemma.  There must exist at least two nodes $j,k\in\mathrm{V}$ such that $\mathbf{y}_j^\ast\neq \mathbf{y}_k^\ast$, and this concludes the proof.

 \subsection*{F. Proof of Theorem \ref{thm4}}
 Suppose the system of Boolean equation (\ref{bes}) is satisfiable. Then the linear equation $
    \mathbf{M}_{f_i} \mathbf{y}=\Theta_1(\sigma_i),\ \ i=1,\dots,n
    $
    admits at least one exact solution. Therefore, applying Lemma \ref{prop2} we conclude that in Step 4 of the algorithm \ref{algo:solbability}, there holds $\tilde{\mathbf{y}}_i=\sum_{k=1}^n\mathcal{P}_\ast (\mathbf{x}_k(0))/n$ for all $i=1,\dots,n$. This obviously leads to $\tilde{\mathbf{y}}_{\rm ave}=\tilde{\mathbf{y}}_i$ for all $i$. As a result,  Algorithm \ref{algo:solbability} proceeds to Step $6$ and Step $7$. Based on Theorem \ref{thm1},  with probability one $\mathcal{S}$ will be returned as the exact solution set of the Boolean equations (\ref{bes}), which certainly satisfies $\mathcal{S}\neq\emptyset$. Therefore, Algorithm \ref{algo:solbability} correctly returns {\sf satisfiable}.

   Now suppose on the other hand   the system of Boolean equation (\ref{bes}) is not satisfiable. There will be two cases.
   \begin{itemize}
   \item[(a)] Let the linear equation $
    \mathbf{M}_{f_i} \mathbf{y}=\Theta_1(\sigma_i),\ \ i=1,\dots,n
    $ admit no exact solutions. Based on Lemma \ref{lem7}, we know that the Step 4 of the algorithm \ref{algo:solbability} does produce a finite value $\tilde{\mathbf{y}}_i$ at each node $i$
  as the limit of the algorithm (\ref{eq2}), but there exist at least two nodes $j$ and $k$ such that $\tilde{\mathbf{y}}_j\neq \tilde{\mathbf{y}}_k$. As a result, except for a set of initial values with measure zero, there holds $\tilde{\mathbf{y}}_{\rm ave}=\tilde{\mathbf{y}}_i$ for any node $i$. Consequently, with probability one, Algorithm \ref{algo:solbability} correctly returns {\sf unsatisfiable} at Step $5$.

  \item[(b)] Let the linear equation $
    \mathbf{M}_{f_i} \mathbf{y}=\Theta_1(\sigma_i),\ \ i=1,\dots,n
    $ admit at least one exact solutions. Again,  from Theorem \ref{thm1},  with probability one $\mathcal{S}$ will be returned as the exact solution set of the Boolean equations (\ref{bes}), in which case there must hold $\mathcal{S}=\emptyset$. Therefore, Algorithm \ref{algo:solbability} correctly returns {\sf unsatisfiable} at Step $7$.
  \end{itemize}
  We have now proved the desired theorem.

 \subsection*{G. Proof of Theorem \ref{thm-DP}}
  Let $f(x)=\sigma$ and $f'(x)=\sigma'$ be two Boolean equation systems in the form of (\ref{bes}) that are adjacent, i.e.,      there is a unique $k\in\{1,\dots,n\}$ such that $\mathcal{S}_k$ and $\mathcal{S}_k'$  with $|\mathcal{S}_k|=|\mathcal{S}_k'|$ differ by only one entry, and the rest $\mathcal{S}_j$ and $\mathcal{S}_j'$
are identical. Here $\mathcal{S}=\mathcal{S}_1\times \dots \times \mathcal{S}_n,\mathcal{S}'=\mathcal{S}_1'\times \dots \times \mathcal{S}_n'$ as their (local) solution spaces, respectively.

 Denote        $\mathbf{H}_i \mathbf{y}= \mathbf{z}_i $ and  $\mathbf{H}_i' \mathbf{y}= \mathbf{z}_i $ as the induced algebraic equation for  $f_i(x)=\sigma_i$ and $f_i'(x)=\sigma_i$, respectively. The following lemma holds.
  \begin{lemma}\label{lemma-8}There hold $\mathbf{H}_i =\mathbf{H}_i' $ for all $i\neq k\in \{1,\dots,n\}$, and  $\mathbf{H}_k,\mathbf{H}_k' $ differ by exactly two columns.
  \end{lemma}
\noindent {\it Proof.} Note that, for the two Boolean equations $f_i(x)=\sigma_i$ and $f_i'(x)=\sigma_i$, the matrix representation of $f_i$ and $f_i'$ are
 \begin{align}
\mathbf{H}_i=\Big[\delta_{2}^{f_i(\itob{1})}\ \delta_{2}^{f_i(\itob{2})}\ \dots, \ \delta_{2}^{f_i(\itob{2^m})}\Big]
\end{align}
and
 \begin{align}
\mathbf{H}_i'=\Big[\delta_{2}^{f_i'(\itob{1})}\ \delta_{2}^{f_i(\itob{2})}\ \dots, \ \delta_{2}^{f_i'(\itob{2^m})}\Big].
\end{align}
For the two matrices $\mathbf{H}$ and $\mathbf{H}'$, the columns that are identical to $\Theta_1(\sigma_i)$, corresponds to  solutions of the Boolean equation.  Now, from the definition of adjacency, there hold  $|\mathcal{S}_k|=|\mathcal{S}_k'|$, and $\mathcal{S}_k$ and $\mathcal{S}_k'$ differ by only one entry. Consequently, $\mathbf{H}_i$ and $\mathbf{H}_i'$
 differ by exactly two columns. \hfill$\square$

\begin{lemma}
 There hold $\|\mathbf{H}_k^\dag \mathbf{H}_k - {\mathbf{H}_k'}^\dag \mathbf{H}_k'\|\leq 2+6\times2^{{(m/2)}}$ and $\|\mathbf{H}_k^\dag  - {\mathbf{H}_k'}^\dag\|\leq 6$.
\end{lemma}
\noindent {\it Proof.}
We observe that (i) if $\rank(\mathbf{H}_k)=1$, then $\mathbf{H}_k^\dag = 2^{-m} \mathbf{H}_k^\top$, which yields $\|\mathbf{H}_k^\dag\| = 2^{-m/2}$; (ii) if $\rank(\mathbf{H}_k)=2$, then $\mathbf{H}_k^\dag = \mathbf{H}_k^\top\big(\mathbf{H}_k\mathbf{H}_k^\top\big)^{-1}$, which yields $\|\mathbf{H}_k^\dag\| = \sigma_{M}\left(\big(\mathbf{H}_k\mathbf{H}_k^\top\big)^{-1}\right)\leq 1$. Thus, we have $\|\mathbf{H}_k^\dag\|\leq 1$. Using similar arguments, we have $\|{\mathbf{H}_k'}^\dag\|\leq 1$. Then with Lemma \ref{lemma-8} and according to \cite[Theorem 10.4.5]{Campbell2009}, we have
\[\begin{array}{rcl}
\|\mathbf{H}_k^\dag  - {\mathbf{H}_k'}^\dag\| &\leq& 3\max\{\|\mathbf{H}_k^\dag\|^2,\|{\mathbf{H}_k'}^\dag\|^2\}\|\mathbf{H}_k  - {\mathbf{H}_k'}\| \\
&\leq& 3\|\mathbf{H}_k  - {\mathbf{H}_k'}\|\\
 &\leq& 6\,
\end{array}\]
and
 \[\begin{array}{rcl}
 \|\mathbf{H}_k^\dag \mathbf{H}_k - {\mathbf{H}_k'}^\dag \mathbf{H}_k'\| &\leq& \|{\mathbf{H}_k^\dag} (\mathbf{H}_k - \mathbf{H}_k')\|+ \|(\mathbf{H}_k^\dag - {\mathbf{H}_k'}^\dag) \mathbf{H}_k'\|\, \\
 &\leq& \|{\mathbf{H}_k^\dag} \|\|\mathbf{H}_k - \mathbf{H}_k'\|+ \|\mathbf{H}_k^\dag - {\mathbf{H}_k'}^\dag\|\| \mathbf{H}_k'\|\, \\
 &\leq& 2 \| \mathbf{H}_k^\dag\|+6\| \mathbf{H}_k'\|\\
 &\leq& 2+6\times2^{{(m/2)}}\,.
 \end{array}\]
 \hfill$\square$

Now we consider the distributed linear equation solver (\ref{newlae}) applied to $\mathbf{H} \mathbf{y}= \mathbf{z}  $ and  $\mathbf{H}' \mathbf{y}= \mathbf{z}  $, with initial value taken within $[-1,1]$   for each entry of $\mathbf{x}_i(0)$.  Denote $\mathrm{M}_\ast$ as the mapping from $(\mathbf{H},\mathbf{z})$ (and $(\mathbf{H}',\mathbf{z}')$, with slight abuse of notation) to $\mathbf{x}(k), k=1,\dots,T$. Let $\mathbf{P}'= {\rm diag}(\mathbf{P}_1'\ \dots\ \mathbf{P}_n')$ is a block diagonal matrix with $\mathbf{P}_i'=\mathbf{I}_d - {\mathbf{H}_i'}^\dag {\mathbf{H}_i'}$, and
\[
\mathbf{b}'=\begin{bmatrix}
{\mathbf{H}_1'}^\dag  \mathbf{z}_1\\
{\mathbf{H}_2'}^\dag  \mathbf{z}_2\\
\vdots\\
{\mathbf{H}_n'}^\dag  \mathbf{z}_n
\end{bmatrix}.
\]

\begin{lemma}
Let $\kappa = 6+(2+6\times2^{{(m/2)}})2^{m}$. For any $\epsilon>0$, if $\sigma \geq\kappa T/ \epsilon$, then there holds
\begin{align}\label{eq:lemma-dp}
\mathbb{P}(\mathrm{M}_\ast(\mathbf{H},\mathbf{z})\in \mathfrak{E})\leq e^\epsilon  \mathbb{P}(\mathrm{M}_\ast(\mathbf{H}',\mathbf{z})\in \mathfrak{E})
\end{align}
for all events $\mathbf{x}(k),k=1,\dots,T$.
\end{lemma}
\noindent {\it Proof.}
  Define
  \[
  \hat \xb(t) := \mathbf{P} \mathbf{W}\mathbf{x}(t)+ \mathbf{b} + \omegab(t)\,.
  \]
  It is clear that $\xb(t) = \Pi_{\Omega}\big(\hat \xb(t)\big)$ where $\Pi_{\Omega}(\cdot)$ is the deterministic projector onto the set $\Omega$.
  Denote the mapping from $(\mathbf{H},\mathbf{z})$ to $\big(\xb(t)\big)_{t=1}^T$ by $\widehat{\mathrm{M}}_\ast(\mathbf{H},\mathbf{z})$. Thus, it can be easily seen that there is a deterministic mapping $\widehat\Pi_{\Omega}(\cdot)$ such that $\mathrm{M}_\ast(\mathbf{H},\mathbf{z}) = \widehat\Pi_{\Omega}\Big(\widehat{\mathrm{M}}_\ast(\mathbf{H},\mathbf{z})\Big)$ holds. Recalling the fact that the differential privacy is immune from the post-processing \cite{dwork2014algorithmic}, we can find that (\ref{eq:lemma-dp}) is satisfied if and only if
  \begin{align}\label{eq:lemma-dp-2}
\mathbb{P}(\widehat{\mathrm{M}}_\ast(\mathbf{H},\mathbf{z})\in \mathcal{E})\leq e^\epsilon  \mathbb{P}(\widehat{\mathrm{M}}_\ast(\mathbf{H}',\mathbf{z})\in \mathcal{E})
\end{align}
is satisfied. Therefore, in the following we focus on proving (\ref{eq:lemma-dp-2}).

Given any $\xb(t)\in\Omega$, we have
\begin{equation}\label{eq:pdf}
\begin{array}{rcl}
  \frac{\mathbb{P}\big(\mathbf{P} \mathbf{W}\mathbf{x}(t)+ \mathbf{b} + \omegab(t)\in R_t\big)}{\mathbb{P}\big(\mathbf{P}' \mathbf{W}\mathbf{x}(t)+ \mathbf{b}' + \omegab(t)\in R_t\big)} &\leq& \mbox{exp}\Big(\frac{\|(\mathbf{P}-\mathbf{P}') \mathbf{W}\mathbf{x}(t) +(\mathbf{b}-\mathbf{b}')\|_1}{\sigma} \Big)\,\\
   &\leq& \mbox{exp}\Big(\frac{\|(\mathbf{P}-\mathbf{P}') \mathbf{W}\mathbf{x}(t) \|_1}{\sigma} +\frac{\|\mathbf{b}-\mathbf{b}'\|_1}{\sigma}\Big)
\end{array}
\end{equation}
for all $R_t\subset \mathbb{R}^{2^m}$.
Since $\xb(t)\in\Omega$ for all $t$, we note that
\[\begin{array}{rcl}
\|(\mathbf{P}-\mathbf{P}') \mathbf{W}\mathbf{x}(t)\|_1 &=& \|(\mathbf{H}_k^\dag \mathbf{H}_k - {\mathbf{H}_k'}^\dag \mathbf{H}_k')\sum_{j=1}^{n}w_{kj}\mathbf{x}_i(t)\|_1\\
&\leq& \|\mathbf{H}_k^\dag \mathbf{H}_k - {\mathbf{H}_k'}^\dag \mathbf{H}_k'\|\|\sum_{j=1}^{n}w_{kj}\mathbf{x}_i(t)\|_1\\
&\leq& (2+6\times2^{{(m/2)}})\|\sum_{j=1}^{n}w_{kj}\mathbf{x}_i(t)\|_1\\
&\leq& (2+6\times2^{{(m/2)}})2^{m}
\end{array}\]
and
\[\begin{array}{rcl}
\|\mathbf{b}-\mathbf{b}'\|_1 &=& \|(\mathbf{H}_k^\dag - {\mathbf{H}_k'}^\dag )\zb_k\|_1\,\\
&\leq& \|\mathbf{H}_k^\dag - {\mathbf{H}_k'}^\dag\|\|\zb_k\|_1\,\\
&\leq& 6\,.
\end{array}\]
Thus, the (\ref{eq:pdf}) can be bounded by
\begin{equation}\label{eq:pdf-2}
  \frac{\mathbb{P}\big(\mathbf{P} \mathbf{W}\mathbf{x}(t)+ \mathbf{b} + \omegab(t)\in R_t\big)}{\mathbb{P}\big(\mathbf{P}' \mathbf{W}\mathbf{x}(t)+ \mathbf{b}' + \omegab(t)\in R_t\big)}
   \leq \mbox{exp}\Big(\frac{6+(2+6\times2^{{(m/2)}})2^{m}}{\sigma}\Big):= \mbox{exp}(\kappa / \sigma)
\end{equation}
Therefore, with $\sigma \geq\kappa T/ \epsilon$, we have
\[
\frac{\mathbb{P}(\mathrm{M}_\ast(\mathbf{H},\mathbf{z})\in \mathcal{E})}{\mathbb{P}(\mathrm{M}_\ast(\mathbf{H}',\mathbf{z})\in \mathcal{E})} \leq \prod_{t=1}^{T} \frac{\mathbb{P}\big(\mathbf{P} \mathbf{W}\mathbf{x}(t)+ \mathbf{b} + \omegab(t)\in R_t\big)}{\mathbb{P}\big(\mathbf{P}' \mathbf{W}\mathbf{x}(t)+ \mathbf{b}' + \omegab(t)\in R_t\big)} \leq \mbox{exp}(T\kappa / \sigma) \leq e^\epsilon\,.
\]
This proves the desired lemma.
 \hfill$\square$

We are now in a place to prove Theorem 5. For two adjacent Boolean equations, the induced algebraic equations satisfy    $\mathbf{H}_i =\mathbf{H}_i' $ for all $i\neq k\in \{1,\dots,n\}$, and  $\mathbf{H}_k,\mathbf{H}_k' $ differs by exactly two columns. The mapping from the Boolean equation to the algebraic equation is deterministic and reversible under (\ref{eq1}), Lemma 9 and Lemma 10 guarantee  that if $\sigma \geq\kappa T/ \epsilon$, the   distributed linear equation solver (\ref{newlae}) for one round to generate a single $\mathbf{y}_{i,s}$ in Algorithm 5 is differentially private. Based on the privacy composition law of randomized algorithms (Theorem 3.14 in \cite{dwork2014algorithmic}), the desired theorem follows immediately due to the fact that Algorithm 5 contains $K$ independent rounds of the mechanism $ {\mathrm{M}}_\ast$.

\end{document}